\newtheorem{thm}{Theorem}
\newtheorem{lem}{Lemma}
\newdefinition{rmk}{Remark}
\newdefinition{definition}{Definition}
\newdefinition{obs}{Observation}
\begin{document}

\title{On independent $[1,2]$-sets in trees}

\author[ual]{S.A. Aleid\fnref{fn1}}
\ead{sahar.aleid@ual.es}
\author[ual]{J. C\'aceres\fnref{fn2}}
\ead{jcaceres@ual.es}
\author[ual]{M.L. Puertas\corref{cor1}\fnref{fn2}}
\ead{mpuertas@ual.es}

\cortext[cor1]{Corresponding author}

\fntext[fn1]{Supported by Phoneix Proyect (Erasmus Mundus Programme).}
\fntext[fn2]{Partially supported by Junta de Andaluc\'ia FQM305 and MTM2014-60127-P.}

\address[ual]{Department of Mathematics, University of Almer\'ia.}

\begin{abstract}
An independent $[1,k]$-set $S$ in a graph $G$ is a dominating set which is independent and such that every vertex not in $S$ has at most $k$ neighbors in it. The existence of such sets is not guaranteed in every graph and trees having an independent $[1,k]$-set have been characterized. In this paper we solve some problems previously posed by other authors about independent $[1,2]$-sets. We provide a necessary condition for a graph to have an independent $[1,2]$-set, in terms of spanning trees and we prove that this condition is also sufficient for cactus graphs. We follow the concept of excellent tree and characterize the family of trees such that any vertex belong to some independent $[1,2]$-set. Finally we describe a linear algorithm to decide whether a tree has an independent $[1,2]$-set. Such algorithm can be easily modified to obtain the cardinality of the smallest independent $[1,2]$-set of a tree.
\end{abstract}

\begin{keyword}
Domination, independence, spanning trees, excellent trees.\\
$2010$ $MSC: 05C69$
\end{keyword}

\maketitle

\section{Introduction}

All the graphs considered here are finite, undirected, simple and connected. Undefined basic concepts can be found in introductory graph theoretical
li\-te\-ra\-tu\-re as~\cite{chartrand,haynes}. Let $G=(V,E)$ be a graph, the \emph{open neighborhood} of a vertex $v\in V$ is the set $N(v)=\{u|uv\in E\}$ of vertices adjacent to $v$. Each vertex $u\in N(v)$ is called a \emph{neighbor} of $v$. The \emph{closed neighborhood} of a vertex $v\in V$ is the set $N[v]=N(v)\cup\{v\}$. The open neighborhood of a set $S\subseteq V$ of vertices is $N(S)=\cup_{v\in S} N(v)$, while the closed neighborhood of a set $S$ is $N[S]=\cup_{v\in S} N[v]$. A set $S$ is \emph{independent} if no two vertices in $S$ are adjacent. A set $S$ is a \emph{dominating set} of a graph $G$ if $N[S]=V$, that is, for every $v\in V$, either $v\in S$ or $v\in N(u)$ for some vertex $u\in S$. A dominating set that is independent is an \emph{independent dominating} set.

In~\cite{chellali1}, Chellali et al. define a subset $S\subseteq V$ in a graph $G$ to be a \emph{[j,k]-set} if for every vertex $v\in V\setminus S$, $j\leq |N(v)\cap S|\leq k$, that is every vertex in $ V\setminus S$ is adjacent to at least $j$ vertices, but not more than $k$ vertices in $S$. In~\cite{chellali2} a similar definition was introduced with the additional condition of independence, and the minimum cardinality of an independent $[j,k]$-set is denoted by \emph{$i_{[j,k]}(G)$}. Note that the existence of such sets is not guaranteed in every graph and a characterization of trees having an independent $[1,k]$-set can be found in~\cite{chellali2}.

In this paper we focus on independent $[1,2]$-sets, that is an independent dominating set $S$ of a graph $G$ such that every vertex $u\in V(G)\setminus S$ has at most two neighbors in $S$. A number of open problems about this type of domination sets are posed in~\cite{chellali2}. In Section~\ref{sec:spanning} we give a necessary condition for a graph $G$ to have an independent $[1,2]$-set in terms of its spanning trees, that is an answer to Problem 2. This necessary condition becomes also sufficient in the class of cactus graphs, that gives a partial answer to Problem 1.

We also study the trees having an independent $[1,2]$-set from a different point of view. In Section~\ref{sec:excellent} we follow the concept of excellent tree proposed in~\cite{fricke} and we adapt it to the environment of our study, providing a characterization of trees such that any vertex belong to an independent $[1,2]$-set, that is not necessarily minimum.

The characterization of trees having an independent $[1,2]$-set of~\cite{chellali2} does not allow to obtain a polynomial algorithm solving this decision problem, so we devoted Section~\ref{sec:algorithm} to describe a linear algorithm to decide whether a tree has an independent $[1,2]$-set. This algorithm can be easily modified to obtain the cardinality of the smallest independent $[1,2]$-set of a tree, therefore we can compute $i_{[1,2]}(T)$ that solves the part of Problem 8 of~\cite{chellali2} regarding this parameter.

\section{Spanning trees}\label{sec:spanning}

In this section we provide a necessary condition for a graph $G$ to have an independent $[1,2]$-set, in terms of its spanning trees, which gives an answer to Problem 2 posed in~\cite{chellali2}. Recall that a \emph{spanning tree} of a graph $G$ is a subgraph that includes all the vertices of $G$ and that is a tree. In addition we show that this condition is also sufficient in the family of cactus graphs, which gives a partial answer to Problem 1.

To this end we will need the family ${\mathcal F}_2$ of trees having an independent $[1,2]$-set given in Theorem 11 of \cite{chellali2}. For the sake of completeness we sketch here the construction. As a first step the family of $p_2$-trees is defined in the following way. Let $T$ be a non-trivial tree and let $V(T)=X\cup Y$ be the unique bipartition of the vertex set. A tree $T$ is called a \emph{$p_2$-tree} if every vertex in one of the partite sets has degree at most $2$ and such a partite set is called a {\it $p_2$-set}. It is clear that if $X$ is a $p_2$-set of $T$ then $Y$ is an independent $[1,2]$-set of $T$. Finally Theorem 11 of~\cite{chellali2} states that a non-trivial tree $T$ admits an independent $[1,2]$-set if and only if $T$ can be obtained from a family $T_1,\dots, T_t$ of $p_2$-trees adding $t-1$ edges where each edge joins vertices in two different sets $X_i$ and $X_j$.

We call the family of trees $\mathcal{G}(T)=\{T_1,\dots, T_t\}$ a \emph{generating family} of $T$ and therefore trees in family ${\mathcal F}_2$ are those trees having a generating family. We would like to point out that the proof of Theorem 11 of~\cite{chellali2} also shows the correspondence between generating families and independent $[1,2]$-sets in a tree $T$. We recall this relationship in the following definition.

\begin{definition}
Let $T\in {\mathcal F}_2$. The independent $[1,2]$-set associated to the generating family $\mathcal{G}(T)=\{T_1,\dots, T_t\}$ is $S=\bigcup_{i=1}^t Y_i$, where $V(T_i)=X_i\cup Y_i$ is the bipartition into a $p_2$-set $X_i$ and an independent $[1,2]$-set $Y_i$.

Conversely the generating family associated to an independent $[1,2]$-set $S$ is the family of trees of the forest resulting of removing from $T$ all edges with both vertices in $V(T)\setminus S$.
\end{definition}

The necessary condition for a graph $G$ to have an independent $[1,2]$-set is shown in the following result.

\begin{thm}\label{thm:spanning trees}
Let $G$ be a graph having an independent $[1,2]$-set. Then there exists a spanning tree $T$ of $G$ satisfying $T\in \mathcal{F}_2$ and having a generating family  $\mathcal{G}(T)=\{ T_1,\dots, T_t\}$ with $V(T_j)=X_j\cup Y_j$ the bipartition into a $p_2$-set and an independent $[1,2]$-set respectively, such that any edge $e=uv\in E(G)\setminus E(T)$ satisfies either $u,v\in \bigcup_{j=1}^{t} X_j$ (type A edge) or there exists $j_e\in \{1,\dots, t\}$ such that $u$ is a leaf of $T_{j_e}$, $u\in X_{j_e}$ and $v\in Y_{j_e}$ (type B edge).
\end{thm}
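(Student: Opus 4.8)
The plan is to read the required spanning tree directly off the given independent $[1,2]$-set $S$; we may assume $G$ has at least two vertices. Put $W=V(G)\setminus S$, so $S$ and $W$ are both non-empty. Since $S$ is independent it induces no edge of $G$; since $S$ is dominating and is a $[1,2]$-set, every $w\in W$ has exactly one or exactly two neighbours in $S$; and since $G$ is connected, every $s\in S$ has a neighbour in $W$. Let $H$ be the spanning subgraph of $G$ whose edges are precisely the edges of $G$ joining a vertex of $S$ to a vertex of $W$ (the \emph{crossing} edges); by the above, $H$ has no isolated vertex. Let $H_1,\dots,H_p$ be the connected components of $H$, and put $X_j=W\cap V(H_j)$ and $Y_j=S\cap V(H_j)$. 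Each $H_j$ is connected, bipartite with parts $X_j$ and $Y_j$, and, since a one-vertex component would be isolated in $H$, satisfies $|V(H_j)|\ge 2$ with both parts non-empty.

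For each $j$ choose a spanning tree $R_j$ of $H_j$. Every edge of $R_j$ is a crossing edge, so the bipartition of the tree $R_j$ is $X_j\cup Y_j$, and each vertex of $X_j$ has degree at most $2$ in $R_j$ because it has at most two neighbours in $S$ in all of $G$; hence $R_j$ is a non-trivial $p_2$-tree with $p_2$-set $X_j$ and independent $[1,2]$-set $Y_j$. The forest $\bigcup_j R_j$ spans $V(G)$ and has exactly $p$ components. The key point is that any edge of $G$ joining two distinct components $H_i$ and $H_j$ must be a $W$-$W$ edge: it is not an $S$-$S$ edge by independence, and it is not a crossing edge since every crossing edge lies inside one component of $H$. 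As $G$ is connected, we may therefore choose $p-1$ such $W$-$W$ edges $e_1,\dots,e_{p-1}$ so that the graph $T$ obtained from $\bigcup_j R_j$ by adding $e_1,\dots,e_{p-1}$ is connected (contract each $H_j$ and take a spanning tree of the resulting connected multigraph). Since $T$ then has $(|V(G)|-p)+(p-1)=|V(G)|-1$ edges, $T$ is a spanning tree of $G$.

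It remains to verify the stated properties of $T$. First, $S$ is an independent $[1,2]$-set of $T$: independence and the $[1,2]$-condition are inherited from $G$, and $S$ dominates $T$ because every $w\in W$ keeps inside $R_j\subseteq T$ a neighbour lying in $S$. Hence $T\in\mathcal{F}_2$, and by the correspondence between generating families and independent $[1,2]$-sets recalled above, the generating family of $T$ associated to $S$ is obtained by deleting from $T$ all edges with both endpoints in $W$; these are exactly $e_1,\dots,e_{p-1}$, because no $R_j$ contains a $W$-$W$ edge. Thus $\mathcal{G}(T)=\{R_1,\dots,R_p\}$ with the bipartitions $X_j\cup Y_j$ above, $\bigcup_j Y_j=S$ and $\bigcup_j X_j=W$. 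Finally, let $e=uv\in E(G)\setminus E(T)$. By independence $e$ is not an $S$-$S$ edge. If $e$ is a $W$-$W$ edge then $u,v\in\bigcup_j X_j$, so $e$ is of type A. Otherwise $e$ is a crossing edge, say with $u\in W$ and $v\in S$; then $e\in E(H)$, so $u$ and $v$ lie in a common component $H_j$, giving $u\in X_j$ and $v\in Y_j$. Moreover $u$ has two neighbours in $S$: if it had only one, that crossing edge would be the sole edge of $H$ at $u$ and hence would belong to $R_j$, contradicting $uv\notin E(T)$. Since $uv\notin E(T)$, at most one of the two crossing edges at $u$ lies in $R_j$, so $u$ has degree $1$ in $R_j$; that is, $u$ is a leaf of $R_j$, and $e$ is of type B.

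The one genuinely delicate decision is to take the vertex sets of the generating trees to be exactly the components of $H$, not any finer partition: this simultaneously forces every crossing non-tree edge to have both endpoints in a single $R_j$ (so it can only be of type B), makes the edges used to reconnect $G$ automatically of the $W$-$W$ kind, and ensures that forming the generating family deletes precisely those reconnecting edges, so each $R_j$ — and in particular the leaf degrees within it — is left untouched. Everything else is routine verification.
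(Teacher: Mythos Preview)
Your proof is correct and takes a genuinely different route from the paper. The paper works \emph{top-down}: starting from $G$ it iteratively destroys induced cycles by removing, at each step, either a $W$--$W$ edge (case A) or, if the cycle alternates, an arbitrary cycle edge (case B), being careful not to re-remove an edge of an earlier alternating cycle. The leaf condition for type~B edges is then deduced a posteriori from the fact that the two cycle-neighbours of $u_i$ already exhaust its $S$-neighbours. Your argument is \emph{bottom-up}: you first isolate the bipartite ``crossing'' subgraph $H$ on the $S$--$W$ edges, take a spanning tree $R_j$ of each component, and then reconnect with $W$--$W$ edges only. This immediately forces the generating family to be $\{R_1,\dots,R_p\}$, and the leaf property for a crossing non-tree edge $uv$ drops out cleanly from the degree bound $\deg_{H}(u)\le 2$ together with $uv\notin R_j$.

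What your approach buys is a cleaner identification of the generating family (no need to track which removed edges were case~A versus case~B through an inductive process) and a one-line proof of the leaf condition. What the paper's approach buys is a slightly more explicit description of \emph{which} edges can be removed when one wants to trace the construction cycle by cycle; this viewpoint is what they exploit later when analysing cactus graphs, where each non-tree edge lies in a unique cycle. Either argument would serve as the proof of the theorem.
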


\begin{proof}
Let $G$ be a graph having an independent $[1,2]$-set $S$. If $G$ is a tree the conditions are trivially true. Now suppose that $G$ has an induced cycle $C_0$. If there exists an edge $e$ in $C_0$ with both vertices in $V(G)\setminus S$ then pick $e_1=e$ (we call this case A), if each edge of $C_0$ has exactly one vertex in $S$, take $e_1$ any edge of $C_0$ (we call this case B). We define $G_1=G-e_1$, if it is not a tree it has an induced cycle $C_1$. Again either there is an edge $e_2=u_2v_2$ in $C_1$ such that $u_2,v_2\notin S$ (case A) or every edge of $C_1$ has exactly one vertex in $S$ (case B). For the second case although $C_0$ and $C_1$ could share same edges, we can take $e_2$ an edge of $C_1$ which is not an edge of $C_0$, because $C_1$ is an induced cycle in $G_1$ however vertices of $C_0$ do not induce a cycle in $G_1$. We repeat this process until we obtain $G_k=G-\{e_1,\dots ,e_k\}$ a spanning tree of $G$, where each edge $e_i=u_iv_i$ belong to $C_{i-1}$ an induced cycle of $G_{i-1}=G-\{e_1,\dots e_{i-1}\}$ and satisfies either $u_i,v_i\notin S$ or $e_i$ is not an edge of any cycle $C_{r}, r<i-1$ and every edge in $C_{i-1}$ has exactly one vertex in $S$.

Now note that $S$ is also an independent $[1,2]$-set of $G_k$, because removing edges from $G$ does not affect independence and both cases A and B ensure that $S$ dominates $G_k$. So $G_k\in \mathcal{F}_2$ and we can take $\mathcal{G}(G_k)=\{ T_1,\dots, T_t\}$ the generating family of $G_k$ associated to $S$. If edge $e_i=u_iv_i$ is in case A then $u_i,v_i\in V(G_k)\setminus S=\bigcup_{j=1}^{t} X_j$ (type A edge). If $e_i=u_iv_i$ is in case B, then every edge of $C_{i-1}$ has exactly one vertex in $S$ and note that no other edge of $C_{i-1}$ will be removed in successive steps of the construction of $G_k$, so $u_i,v_i$ are connected in $G_k$ by the path $C_{i-1}-e_i$, where each edge has one vertex in $S$. This means that $u_i,v_i$ are in the same connected component of the forest resulting of removing from $G_k$ all edges with no vertex in $S$, or equivalently that there exists $j_i\in \{1,\dots, t\}$ with $u_i,v_i\in V(T_{j_i})$. Moreover $u_i\notin S$ and $v_i\in S$ gives $u_i\in X_{j_i}$ and $v_i\in Y_{j_i}$. Finally both neighbors of $u_i$ in the cycle $C_{i-1}$ belong to $S$, so if $u_i$ has any other neighbor $z$ in $G$, which is not in $C_{i-1}$ it is clear that $z\notin S$ so edge $u_iz$ joints two different trees of the generating family $\mathcal{G}(G_k)=\{ T_1,\dots, T_t\}$. That means $u_i$ is a leaf of $T_{j_i}$ (type B edge).
\end{proof}

The following example shows that the converse of Theorem~\ref{thm:spanning trees} is not true in general. The graph in Figure~\ref{fig:no_spanning} has no independent $[1,2]$-set because all black vertices should be in such set, so vertex $v$ would have three neighbors in that set. However the set of black vertices is an independent $[1,2]$-set of the tree in Figure~\ref{fig:spanning}, which is the spanning tree of $G$ resulting from removing edges $e_1$ and $e_2$, which are type B.

\par\medskip

\begin{figure}[htbp]
  \begin{centering}
    \subfigure[$G$ has no independent ${[}1,2{]}$-set]{\includegraphics[width=0.45\textwidth]{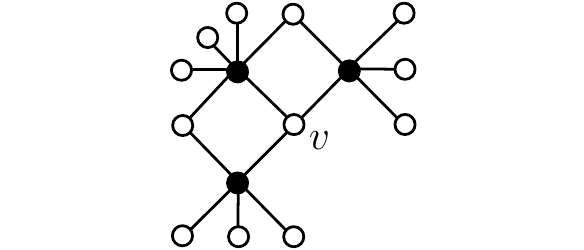}\label{fig:no_spanning}}
    \hspace{1cm}
    \subfigure[$T$ is a $p_2$ tree that spans $G$]{\includegraphics[width=0.45\textwidth]{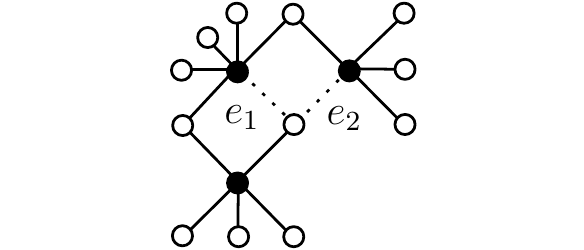}\label{fig:spanning}}
    \caption{The converse of Theorem~\ref{thm:spanning trees} is not true in general}\label{fig:counterexample}
  \end{centering}
\end{figure}

The key point of this counterexample is that the spanning tree is obtained from $G$ removing some edges such that at least one of them belong to two induced cycles, in that example the edge $e_1$. This idea leads us to the family of cactus graphs where the necessary condition to have an independent $[1,2]$-set showed in Theorem~\ref{thm:spanning trees} is also sufficient. Recall that $G$ is a \emph{cactus graph} if every edge of $G$ belongs to at most one cycle. Equivalently $G$ is a cactus graph if and only if every block (maximal connected induced subgraph with no cut vertices) is a cycle or the path $P_2$.

\begin{thm}\label{thm:cactus graphs}
Let $G$ be a cactus graph. Then $G$ has an independent $[1,2]$-set if and only if there exists a spanning tree $T$ of $G$ satisfying $T\in \mathcal{F}_2$ and having a generating family  $\mathcal{G}(T)=\{ T_1,\dots, T_t\}$ with $V(T_j)=X_j\cup Y_j$ the bipartition into a $p_2$-set and an independent $[1,2]$-set respectively, such that any edge $e=uv\in E(G)\setminus E(T)$ satisfies either $u,v\in \bigcup_{j=1}^{t} X_j$ (type A edge) or there exists $j_e\in \{1,\dots, t\}$ with $u$ a leaf of $T_{j_e}$, $u\in X_{j_e}$ and $v\in Y_{j_e}$ (type B edge).
\end{thm}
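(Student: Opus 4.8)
The plan is to establish the nontrivial direction (the converse of Theorem~\ref{thm:spanning trees}), since the forward direction is exactly Theorem~\ref{thm:spanning trees}. So assume $G$ is a cactus with a spanning tree $T\in\mathcal F_2$ whose generating family $\mathcal G(T)=\{T_1,\dots,T_t\}$ satisfies the stated conditions on the edges of $E(G)\setminus E(T)$, and let $S=\bigcup_j Y_j$ be the independent $[1,2]$-set of $T$ associated to this generating family. The goal is to produce an independent $[1,2]$-set of $G$. The first observation is that $S$ is still independent in $G$: every edge of $E(G)\setminus E(T)$ is either type A (both ends in $\bigcup X_j$, hence neither end in $S$) or type B (one end in some $X_{j_e}$, the other in $Y_{j_e}$, so again not both in $S$). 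Also $S$ still dominates $G$ since it dominated $T$ and adding edges cannot destroy domination. So the only possible failure is that some vertex $u\notin S$ acquires a third neighbor in $S$ when the non-tree edges are added back.

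Next I would localize the problem using the cactus structure. A vertex $u\notin S$ has at most two neighbors in $S$ within $T$; each type~B edge incident to $u$ (with $u$ playing the role of the leaf of its $T_{j_e}$) adds one more $S$-neighbor, and type~A edges add none. Since in a cactus every edge lies in at most one cycle and the fundamental cycle $C_{i-1}$ of a non-tree edge $e_i$ — in the notation of the proof of Theorem~\ref{thm:spanning trees} — is an induced cycle, the non-tree edges incident to a fixed $u$ are controlled: if $u$ is the designated leaf of a type~B edge $e=uv$, then in $T_{j_e}$ the vertex $u$ has degree $1$, so its unique $T$-neighbor is in $Y_{j_e}\subseteq S$, and $u$ has exactly one $T$-neighbor. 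Hence before adding $e$, $u$ has exactly one neighbor in $S$, and after adding $e$ it has exactly two. So a single type~B edge at a leaf $u$ is harmless. The real danger is \emph{two} type~B edges $e=uv$ and $e'=uv'$ both using the \emph{same} vertex $u$ as their designated leaf: then $u$ would have three $S$-neighbors $v,v',$ and its tree-neighbor. Here is where I would invoke the cactus hypothesis to rule this out, or rather to show that if it occurs we may \emph{rechoose} the spanning tree / generating family so that it does not: the two fundamental cycles through $e$ and $e'$ both pass through $u$; in a cactus distinct cycles share at most a vertex, so these two cycles meet only at $u$, and one can reroute by keeping one of $e,e'$ as a tree edge and deleting instead the tree-edge at $u$ inside the corresponding cycle, producing a new spanning tree in $\mathcal F_2$ for which $u$ is no longer overloaded. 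Iterating over all overloaded vertices (which are finitely many and, by the cactus structure, affect disjoint cycles) yields a spanning tree and generating family with no overloaded vertex, hence $S$ is an independent $[1,2]$-set of $G$.

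Concretely, the argument I would write is: start from $T$ and $S$ as above; while there is a vertex $u\notin S$ with at least three neighbors in $S$ in the graph $T\cup\{\text{non-tree edges so far added}\}$, pick two type~B non-tree edges $e=uv,\ e'=uv'$ both designating $u$ as leaf (there must be at least two extra $S$-neighbors coming from non-tree edges, and type~A edges contribute none, so at least two such type~B edges exist), consider the fundamental cycle $C$ of $e$ in $T$; since $G$ is a cactus, $C$ contains the tree-edge $uw$ incident to $u$ (here $w\in S$), and swapping — setting $T':=(T-uw)+e$ — still gives a spanning tree, and one checks that $T'\in\mathcal F_2$ with a generating family in which $v$ (not $u$) is now the vertex incident to the non-tree edge $uw$, so $u$ loses one $S$-neighbor. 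Repeating, $u$ ends with exactly two $S$-neighbors; and because cycles in a cactus are edge-disjoint and pairwise almost vertex-disjoint, fixing one vertex does not re-break a previously fixed one, so the process terminates. I would then conclude that the final spanning tree's associated set $S$ is the desired independent $[1,2]$-set of $G$.

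The main obstacle I anticipate is the bookkeeping in the swap step: verifying that $T':=(T-uw)+e$ is again in $\mathcal F_2$ with an appropriate generating family, i.e. that the bipartition classes can be reassigned so that the $p_2$-condition (degree $\le 2$ in each $p_2$-set $X_j$) is preserved and the edge-type conditions (types A and B) still hold for the remaining non-tree edges. This uses that $u$ was a \emph{leaf} of $T_{j_e}$ (degree $1$), so removing the one tree-edge at $u$ and attaching $u$ to $v$ by $e$ just relocates a pendant vertex between two trees of the generating family without changing any degree by more than is allowed; and it uses the cactus property to guarantee that the fundamental cycle really does contain a tree-edge at $u$ and that distinct such cycles interact only at a single shared vertex, so the swaps can be performed independently. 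Once this local lemma is nailed down, the global induction on the number of overloaded vertices is routine.
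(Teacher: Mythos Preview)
Your setup is correct up to the point where you identify the only possible obstruction: a vertex $u\notin S$ with three $S$-neighbours, which (as you note) can only happen if $u$ is the designated leaf of at least two type~B edges. But from there the proposal goes astray.

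The swap procedure cannot work as stated. The set $S=\bigcup_j Y_j$ is fixed, and $|N_G(u)\cap S|$ depends only on $G$ and $S$, not on which spanning tree you use to organise the argument. Replacing $T$ by $T'=(T-uw)+e$ does not remove any $S$-neighbour of $u$ in $G$; it merely relabels which incident edge is ``tree'' and which is ``non-tree''. After the swap the edge $uw$ becomes a non-tree edge with $u\in X_j$ and $w\in Y_j$ and $u$ still a leaf of the new $T_j$, so $uw$ is again type~B and $u$ is exactly as overloaded as before. (If instead you intend to change $S$ along with $T$, you have not said how, and you would then have to re-verify all the type~A/B hypotheses for the new data, which you flag as the main obstacle and do not carry out.) There is also a small slip earlier: ``$u$ has exactly one $T$-neighbour'' is false in general, since $u$ is a leaf of $T_{j_e}$, not of $T$; the correct (and sufficient) statement is that $u$ has exactly one $S$-neighbour in $T$.

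The paper avoids all of this by proving directly that the feared configuration is impossible: it shows that the original $S$ is already an independent $[1,2]$-set of $G$. The key point you are missing is that if $e_1=xy_1$ is type~B with $x$ a leaf of $T_j$, then the unique $T_j$-neighbour $z_1$ of $x$ lies on the (unique) cycle $C_1$ through $e_1$; for any other non-tree edge $e_2=xy_2$, its cycle $C_2$ meets $C_1$ only in $x$ (cactus), so the $T$-edge from $x$ into $C_2$ is not $xz_1$, hence leaves $T_j$, and the $T$-path from $x$ to $y_2$ therefore forces $y_2\notin V(T_j)$. Thus $e_2$ cannot be type~B, so it is type~A and contributes no $S$-neighbour. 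Consequently $x$ has exactly the two $S$-neighbours $z_1$ and $y_1$, and no swap or modification of $T$ is needed.
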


\begin{proof}
We just need to prove the sufficiency. Let $S=\bigcup_{j=1}^t Y_j$ be the independent $[1,2]$-set of $T$ associated to the generating family $\mathcal{G}(T)$ and let us see that it is also an independent $[1,2]$-set of $G$. The graph $G$ is obtained from the spanning tree $T$ adding some edges, so $S$ is also a dominating set of $G$. Moreover, by hypothesis no added edge has both vertices in $S$, therefore $S$ is independent in $G$. Finally we need to show that $S$ is a $[1,2]$-set of $G$. Let $x\in V(G)\setminus S$, if every edge of $G$ incident to $x$ is an edge of $T$, then $N_G(x)=N_T(x)$ and $x$ has at most two neighbors in $S$. On the contrary suppose that the set of edges incident with $x$ which are in $E(G)\setminus E(T)$ is non-empty and denote those edges as $e_1,\dots e_r$ with $e_i=xy_i$. Using that $G$ is a cactus graph and that removing theses edges does not disconnect the graph, each edge $e_i$ belong to exactly one cycle $C_i$ in $G$, with $C_i\neq C_j$ for $i\neq j$, and $x$ is a common vertex of all of them (see Figure~\ref{fig:cactus1}).

Firstly suppose that all edges $e_1,\dots e_r$ are of type A, that is $y_i\in \bigcup_{j=1}^{t} X_j =V(G)\setminus S, \forall i=1,\dots r$. Then the neighbors of $x$ in $G$ other than $y_1,\dots, y_r$, are also neighbors on $x$ in $T$ so it is clear that $x$ has at most two neighbors in $G$ belonging to $S$. On the other hand suppose, without loss of generality, that $e_1=xy_1$ is type B, so there exists $j\in \{1,\dots,t\}$ such that $x$ is a leaf of $T_{j}$, $x\in X_{j}$ and $y_1\in Y_{j}$. Therefore $x$ has just one neighbor in $T_{j}$, say $z_1$, which is in $Y_{j}$, and both $y_1,z_1$ are neighbors of $x$ in $G$ belonging to $S$.

Let $w\in N_G(x)\setminus\{y_1,z_1\}$, if $w\in N_T(x)$ then $w$ belongs to a tree $T_l\neq T_{j}$, the edge $xw$ connects two different trees of the forest $T_1,\dots, T_t$, and by construction $w\notin S$. Finally if $w\notin N_T(x)$, then $w\in \{y_2,\dots, y_k\}$, say $w=y_2$. Vertex $y_2$ belongs to cycle $C_2$ in $G$, different from cycle $C_1$ containing $y_1$, and we denote the neighbor of $x$ in $C_2$, other than $y_2$, by $z_2$. Using that $x$ is a leaf of $T_j$ with neighbor $z_1$, which is a vertex of cycle $C_1$, we obtain that $z_2\neq z_1$, the edge $xz_2$ does not belong to $T_j$ and thus $z_2\notin V(T_j)$. So $z_2$ belongs to a tree of the forest $T_1,\dots, T_t$ different form $T_j$ and $y_2$ belongs to the same one. Therefore $y_2$ does not belong to $V(T_j)$ (see Figure~\ref{fig:cactus2}). This means that edge $xy_2$ must be of type A and $w=y_2\notin S$ as desired.
\end{proof}

\begin{figure}[htbp]
  \begin{centering}
    \subfigure[Cycles sharing vertex $x$ in the cactus graph $G$]{\includegraphics[width=0.45\textwidth]{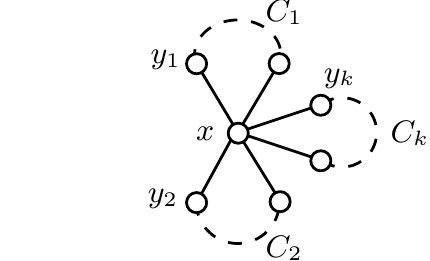}\label{fig:cactus1}}
    \hspace{1cm}
    \subfigure[Edge $x z_2$ connects two trees of forest $T_1,\dots, T_t$ and $x$ is a leaf of $T_j$]{\includegraphics[width=0.45\textwidth]{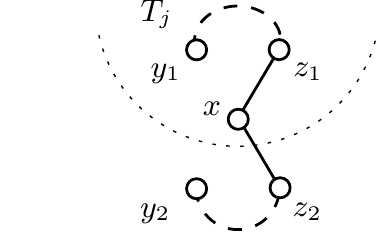}\label{fig:cactus2}}
    \caption{Some cases of Theorem~\ref{thm:cactus graphs}}\label{fig:cactus}
  \end{centering}
\end{figure}

\section{Excellent trees}\label{sec:excellent}

We recall now the concept of \emph{excellent graph} introduced in \cite{fricke}. For a graph $G=(V,E)$, let $\mathcal{P}$ denote a property of subsets $S\subseteq V$. We call a set $S$ with property $\mathcal{P}$ having $\{$minimum, maximum$\}$ cardinality $\mu(G)$ a $\mu(G)$-set. A vertex is called \emph{$\mu$-good} if it is contained in some $\mu(G)$-set. A graph G is called {\it $\mu$-excellent} if every vertex in $V$ is $\mu$-good. For instance $G$ is $\gamma$-excellent if every vertex of $G$ belong to a minimum dominating set. This concept has been studied in the family of trees for different domination-type properties such as domination, irredundance and independence~\cite{fricke,hahe}, restrained domination~\cite{hattingh}  and total domination~\cite{henning}.

We define a similar concept for the independent $[1,2]$-domination and having in mind that the existence of such sets is a key problem so we relax the conditions in the following way.

\begin{definition}
A graph $G$ is \emph{[1,2]-semiexcellent} if every vertex belongs to some independent $[1,2]$-set, not necessarily minimum.
\end{definition}

Our target is to characterize the family of trees that are $[1,2]$-semiexcellent and to this end we will again use the concept of $p_2$-tree and the family ${\mathcal F}_2$ described in Section~\ref{sec:spanning}. Firstly we show a necessary condition for a vertex in order to belong to some independent $[1,2]$-set.

\begin{lem}\label{lem:oneleaf}
Let $T$ be a tree and let $v\in V(T)$. Suppose that there exists an independent $[1,2]$-set $S_v$ containing $v$, then for each $u\in N(v)$, the set $N(u)\setminus\{v\}$ contains at most one leaf.
\end{lem}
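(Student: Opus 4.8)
The plan is to argue by contradiction, exploiting the two opposing constraints that define an independent $[1,2]$-set: independence forces neighbors of $v$ out of $S_v$, while domination forces pendant vertices into $S_v$, and these cannot both hold for too many leaves without violating the upper bound of $2$.

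First I would fix a vertex $u\in N(v)$. Since $v\in S_v$ and $S_v$ is independent, no neighbor of $v$ lies in $S_v$; in particular $u\notin S_v$. Next, suppose for contradiction that $N(u)\setminus\{v\}$ contains two distinct leaves $w_1$ and $w_2$. Being leaves of the tree $T$, each $w_i$ has $u$ as its unique neighbor, $N(w_i)=\{u\}$. Because $S_v$ is a dominating set, each $w_i$ is either in $S_v$ or adjacent to a vertex of $S_v$; since its only neighbor $u$ is not in $S_v$, we must have $w_1,w_2\in S_v$.

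Now collect the neighbors of $u$ that lie in $S_v$: these include $v$, $w_1$ and $w_2$, three pairwise distinct vertices, so $|N(u)\cap S_v|\ge 3$. But $u\notin S_v$ and $S_v$ is a $[1,2]$-set, which requires $|N(u)\cap S_v|\le 2$, a contradiction. Hence $N(u)\setminus\{v\}$ contains at most one leaf, and since $u\in N(v)$ was arbitrary, the claim follows. I do not expect any genuine obstacle here; the only point needing a little care is the observation that a leaf's unique neighbor is exactly $u$ (so domination of the leaf can only come from $u$ or from the leaf itself), which is what pins $w_1,w_2$ into $S_v$.
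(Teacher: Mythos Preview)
Your proof is correct and follows essentially the same argument as the paper: independence forces $u\notin S_v$, domination forces every leaf in $N(u)\setminus\{v\}$ into $S_v$, and then the bound $|N(u)\cap S_v|\le 2$ (together with $v\in S_v$) limits the number of such leaves to one. The paper states this directly rather than framing it as a contradiction, but the content is identical.
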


\begin{proof}
If $S_v$ is an independent $[1,2]$-set containing $v$ and  $u\in N(v)$, it is clear that $u\notin S_v$ and any leaf in $N(u)\setminus \{v\}$ must belong to $S_v$ in order to be dominated, so $N(u)\setminus \{v\}$ can have at most one leaf because $u$ has at most two neighbors in $S_v$.
\end{proof}

The following lemma shows that this condition is also sufficient in the family of $p_2$-trees.

\begin{lem}\label{lem:p2-trees}

Let $T$ be a $p_2$-tree with $V(T)=X\cup Y$ the bipartition into a $p_2$-set $X$ and an independent $[1,2]$-set $Y$. Let $x\in X$ be such that any $y\in N(x)$ satisfies that $N(y)\setminus \{ x\}$ contains at most one leaf. Then $S_x=\big (Y\setminus N(x)\big)\cup L(x) \cup \{x\}$ is an independent $[1,2]$-set of $T$ containing $x$, where $L(x)$ is the set of leaves at distance two of $x$.

\end{lem}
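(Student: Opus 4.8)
The plan is to verify directly that the proposed set $S_x=\big(Y\setminus N(x)\big)\cup L(x)\cup\{x\}$ satisfies the three defining requirements of an independent $[1,2]$-set of $T$: independence, domination, and the $[1,2]$ bound on neighbors. Independence should be the easiest: the only new vertex we have added compared to a subset of $Y$ is $x$ itself (together with $L(x)\subseteq Y$), and since we have removed all of $N(x)$ from $Y$, no neighbor of $x$ survives in $S_x$; as $Y$ is independent and $L(x)\subseteq Y$, the only possible conflict would be between $x$ and some $y\in N(x)$, which has been excluded. I would also note $L(x)\cap N(x)=\emptyset$ since $L(x)$ consists of vertices at distance exactly two from $x$.

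Next I would handle domination. Every vertex of $Y\setminus N(x)$ and every vertex of $L(x)\cup\{x\}$ is obviously dominated (it lies in $S_x$). The vertices needing attention are those in $X\setminus\{x\}$ and those in $N(x)\subseteq Y$. For $w\in X\setminus\{x\}$: in the original tree $w$ had a neighbor in $Y$ (as $Y$ dominates), and I must argue that such a neighbor can be chosen outside $N(x)$, or else that $w$ picks up a dominator from $L(x)\cup\{x\}$. Here the tree structure is essential: if $w\in N(x)$ then $w\in X\cap N(x)=\emptyset$ is impossible since $x\in X$ and $X$ is independent — wait, more carefully, $N(x)\subseteq Y$ because $x\in X$, so $X\setminus\{x\}$ is disjoint from $N(x)$; a vertex $w\in X\setminus\{x\}$ that is adjacent to some $y\in N(x)$ is at distance two from $x$, and I claim its \emph{other} neighbors in $Y$ (if any) are unaffected, while if $y$ is $w$'s only $Y$-neighbor then either $y=x$'s neighbor and $w$ is a leaf — in which case $w\in L(x)\subseteq S_x$ — or $w$ has degree one into $Y$ but might have other neighbors in $X$; since $T$ is bipartite with parts $X,Y$, every edge at $w$ goes to $Y$, so $w$'s unique $Y$-neighbor being removed forces $w$ to be a leaf adjacent to a neighbor of $x$, hence $w\in L(x)$. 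For $y\in N(x)$: $y$ is dominated by $x\in S_x$. This disposes of domination.

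The $[1,2]$ condition is where I expect the real work. Take any $z\in V(T)\setminus S_x$; I must show $|N(z)\cap S_x|\le 2$. Such a $z$ lies in $X$ (if $z\in Y$ then $z\notin S_x$ forces $z\in N(x)$, but then all of $z$'s neighbors lie in $X$, and $z$'s only possible $S_x$-neighbor among $X$ is $x$, giving exactly one — so $Y$-vertices are fine). So assume $z\in X\setminus\{x\}$. Since $z\ne x$ and $z$ is not a leaf of the relevant type, $N(z)\subseteq Y$. Compare $N(z)\cap S_x$ with $N(z)\cap Y$, which has size at most $2$ because $Y$ was already an independent $[1,2]$-set. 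Passing from $Y$ to $S_x$ we \emph{delete} $N(x)$ from $Y$ and \emph{add} $L(x)$ and $x$; since $x\in X$ is not adjacent to $z\in X$, adding $x$ changes nothing for $z$, and $L(x)\subseteq Y$ so adding $L(x)$ adds nothing new to $N(z)\cap S_x$ beyond what was in $N(z)\cap Y$. Therefore $N(z)\cap S_x\subseteq N(z)\cap Y$, so $|N(z)\cap S_x|\le|N(z)\cap Y|\le 2$. The main obstacle, and the place where the hypothesis on $x$ is actually consumed, is the \emph{domination} step for the leaves adjacent to a neighbor $y$ of $x$: if $y\in N(x)$ had two leaves in $N(y)\setminus\{x\}$, removing $y$ from $Y$ would orphan both of them but we only get to re-include them as elements of $L(x)$ — which we do — yet then those two leaves, plus potentially another leaf adjacent to $y$ through a different neighbor, could overload some vertex; the hypothesis "$N(y)\setminus\{x\}$ contains at most one leaf" is exactly what guarantees $L(x)$ re-covers everything that was lost without creating a new $[1,2]$-violation. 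I would close by assembling these three verifications into the statement. $\square$
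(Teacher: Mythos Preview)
There is a genuine slip that propagates into a real gap. You write ``$L(x)\subseteq Y$'' twice, but in fact $L(x)\subseteq X$: since $x\in X$ and $T$ is bipartite with parts $X,Y$, every vertex at distance two from $x$ lies in $X$. For the independence check and for the $[1,2]$ bound on vertices $z\in X\setminus\{x\}$ this slip is harmless (your conclusion $N(z)\cap S_x\subseteq N(z)\cap Y$ is still correct, because $N(z)\subseteq Y$ is disjoint from $L(x)\cup\{x\}\subseteq X$, not because $L(x)\subseteq Y$). But for $y\in N(x)$ the slip is fatal: you assert that ``$z$'s only possible $S_x$-neighbor among $X$ is $x$, giving exactly one,'' which is false. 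The vertex $y$ is adjacent to $x$ \emph{and} to every leaf of $L(x)$ that lies in $N(y)$, all of which are in $X\cap S_x$. So $|N(y)\cap S_x|=1+|L(x)\cap N(y)|$, and it is precisely the hypothesis ``$N(y)\setminus\{x\}$ contains at most one leaf'' that bounds this by $2$. This is where the hypothesis is actually consumed, and the paper's proof makes exactly this computation.

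Your closing paragraph misplaces the role of the hypothesis into the domination step. Domination never fails: even if some $y\in N(x)$ had several leaf neighbours, each such leaf would sit in $L(x)\subseteq S_x$ and be dominated by itself, while $y$ is dominated by $x$. The failure in that scenario is the $[1,2]$ bound at $y$, which would then have $x$ together with two or more leaves from $L(x)$ as $S_x$-neighbours. Once you correct the location of $L(x)$ and redo the count for $y\in N(x)$, your argument matches the paper's; you should also make explicit (as the paper implicitly uses) that a vertex $w\in X\setminus\{x\}$ at distance two from $x$ cannot have both of its at most two neighbours inside $N(x)$, since that would create a $4$-cycle through $x$ in a tree.
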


\begin{proof}

Firstly the set $S_x=\big (Y\setminus N(x)\big)\cup L(x) \cup \{x\}$ is independent because $Y$ is independent and all neighbors of vertices in $L(x)\cup \{x\}$ belong to $N(x)$. Let us see that $S_x$ is a $[1,2]$-set. Let $y\in N(x)$, it is clear that $y$ is dominated by $x$ and using the hypothesis that $N(y)\setminus \{x\}$ has at most one leaf, there is at most one vertex in $L(x)$ that dominates $y$.

On the other hand if $z\in V(T)\setminus S_x$ and $z\in N(y)$ for some $y\in N(x)$, then it is not a leaf so it has degree 2, because $X$ is $p_2$-set. Therefore $z$ has a unique neighbor $y'\neq y$ and it satisfies $y'\in S_x$ (see Figure~\ref{ex:p2tress}).

Finally let $t\in V(T)\setminus S_x$ be such that $t\notin N[y]$ for any $y\in N(x)$. Then $t\in X$ has no neighbors in $L(x)\cup \{x\}$ and it has at least one and at most two neighbors in $Y\setminus N(x)$.
\end{proof}

\begin{figure}[htbp]
\begin{center}
\includegraphics[width=0.35\textwidth]{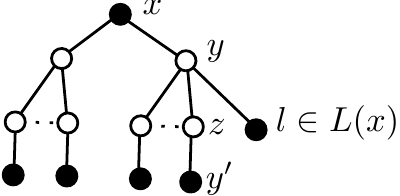}
\caption{Black vertices are in $S_x$ and white vertices are not in $S_x$.}
\label{ex:p2tress}
\end{center}
\end{figure}

Recall that a \emph{strong support} vertex is a vertex having at least two leaves in its neighborhood. In the family of $p_2$-trees it is also possible to obtain an independent $[1,2]$-set that skips a fixed pair of adjacent vertices, under the condition of having no strong support vertices.

\begin{lem}\label{lem:pair}
Let $T$ be a $p_2$-tree with no strong support vertices, $V(T)=X\cup Y$ the bipartition into a $p_2$-set $X$ and an independent $[1,2]$-set $Y$, and let $x,y\in V(T)$ be two adjacent vertices such that none of them is a leaf, $x\in X$ and $y\in Y$. Then there exists an independent $[1,2]$-set $S(x,y)$ such that $x,y\notin S(x,y)$ and $x$ has just one neighbor in $S(x,y)$.
\end{lem}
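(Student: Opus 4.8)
The strategy is to mimic the construction in Lemma~\ref{lem:p2-trees}, but now we must excise \emph{two} adjacent vertices $x\in X$ and $y\in Y$ rather than force a single vertex into the set. The natural first move is to root $T$ at the edge $xy$, or more precisely to consider the two subtrees obtained by deleting the edge $xy$: the component $T_x$ containing $x$ and the component $T_y$ containing $y$. Since neither $x$ nor $y$ is a leaf, both subtrees are non-trivial. Within each subtree I would use the bipartition inherited from $X\cup Y$, so $T_x$ has $x$ on its $X$-side and $T_y$ has $y$ on its $Y$-side.

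The key idea is to build $S(x,y)$ by selecting, in $T_x$, a set that plays the role ``$x$ is excluded but has only one neighbor in the set,'' and in $T_y$ a set that plays the role ``$y$ is excluded.'' For $T_y$, since $y\in Y$ and $y$ is not a leaf, the children of $y$ all lie in $X$ and we can build around them an independent $[1,2]$-set of $T_y$ that avoids $y$: roughly, take $Y$ restricted to $T_y$ minus $\{y\}$ and repair domination of the neighbors of $y$ by adding suitable vertices one level deeper, exactly as in the $L(x)$ trick of Lemma~\ref{lem:p2-trees}. The absence of strong support vertices is what guarantees that no neighbor of $y$ (which would otherwise need two leaves added) gets three set-neighbors. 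For $T_x$, the situation is the genuinely delicate one: $x$ must be out of the set, must be dominated, and must have exactly one set-neighbor. I would pick one neighbor $x'$ of $x$ inside $T_x$ (it exists since $x$ is not a leaf) to be the unique dominator, put $x'\in S(x,y)$, keep the rest of $Y\cap V(T_x)$, remove the other neighbors of $x$ from consideration, and again repair the domination of those removed neighbors by going one step deeper exactly as before. One then checks the $[1,2]$ condition vertex by vertex, the cases being: neighbors of $x$ in $T_x$ other than $x'$, their children, $x$ itself, the symmetric vertices around $y$ in $T_y$, and all vertices far from both $x$ and $y$ (which behave as in the original bipartition).

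The main obstacle I anticipate is bookkeeping at the vertices adjacent to $x$ and to $y$: ensuring that when we delete the ``bad'' neighbors of $x$ from $Y$ and re-add leaves/second-level vertices to restore domination, no vertex ends up with three neighbors in $S(x,y)$. This is precisely where the hypothesis of no strong support vertices is essential — it limits to one the number of forced additions around any support vertex — and it parallels the bound already exploited in Lemma~\ref{lem:p2-trees}. A secondary subtlety is the interface vertex $x'$: we must confirm $x'\notin N(y)$ is automatic (it lies in $T_x$, so it cannot be $y$ or adjacent to $y$ through the deleted edge), and that $x'$ does not acquire an extra set-neighbor from the $T_x$-side construction, which is handled by keeping $x'$'s other neighbors (all in $X$, hence out of $S(x,y)$) and noting $x\notin S(x,y)$. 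Once these local checks are done, independence follows as in Lemma~\ref{lem:p2-trees} since every added vertex is a leaf or has all its neighbors among the excluded vertices $N(x)\cup N(y)$.
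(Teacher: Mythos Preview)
Your plan over-complicates the situation because it misses the decisive structural fact: since $X$ is a $p_2$-set and $x$ is not a leaf, $x$ has degree \emph{exactly} $2$. Thus $x$ has a unique neighbour $y'\neq y$, and $y'\in Y$. This makes your ``$T_x$ side'' entirely trivial: keeping $Y\cap V(T_x)$ unchanged already leaves $x$ with exactly one set-neighbour, namely $y'$. There are no ``other neighbours of $x$'' to remove and nothing to repair there. The paper does not split $T$ at all; it simply modifies $Y$ locally near $y$.

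The genuine work is around $y$, and here your plan has a gap. You propose to take $(Y\cap V(T_y))\setminus\{y\}$ and ``repair domination of the neighbours of $y$'' via the $L(x)$ trick. But those neighbours are mostly fine: each non-leaf neighbour of $y$ lies in $X$, has degree $2$, and is still dominated by its other neighbour in $Y$. The actual problem is that \emph{$y$ itself} is no longer dominated, since $N(y)\subseteq X$. The $L(x)$ analogy from Lemma~\ref{lem:p2-trees} is misleading: there one \emph{inserts} a vertex of $X$ and deletes its $Y$-neighbours, whereas here one \emph{deletes} a vertex of $Y$ and must insert something from $X$ to dominate that very vertex. Doing so forces a short cascade: pick $x_1\in N(y)\setminus\{x\}$; if $x_1$ is a leaf take $S(x,y)=(Y\setminus\{y\})\cup\{x_1\}$; otherwise $x_1$ has a second neighbour $y_1\in Y$ which must be removed for independence, and if $y_1$ has a (unique, by the no-strong-support hypothesis) leaf neighbour $x_2$ one adds it back. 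This three-case local modification is the paper's proof, and the absence of strong support vertices is used to guarantee uniqueness of $x_1$ (when it is a leaf) and of $x_2$, so that $y$ and $y_1$ each acquire at most two set-neighbours.
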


\begin{proof}
Let $T$ be a $p_2$-tree with no strong support vertices, $V(T)=X\cup Y$ the bipartition into a $p_2$-set $X$ and an independent $[1,2]$-set $Y$ and let $x,y\in V(T)$ be two adjacent non-leaves vertices, $x\in X$ and $y\in Y$. Using that $y$ is not a leaf, the set $N(y)\setminus \{x\}$ is non-empty. Firstly suppose that $N(y)$ contains a leaf $x_1$, that is unique by hypothesis. Then $S(x,y)=\big(Y\setminus\{y\}\big) \cup \{ x_1\}$ is an independent $[1,2]$-set of $T$ with $x,y\notin S(x,y)$ and such that $x$ has just one neighbor in it (see Figure~\ref{fig:pair_a}).

On the contrary suppose that $N(y)$ contains no leaves and take any vertex $x_1\in N(y)\setminus \{x\}$. Then $x_1$ has degree $2$ and let $y_1$ be a neighbor of $x_1$ other than $y$. If $y_1$ is a leaf or if $N(y_1)$ contains no leaves, then define $S(x,y)=\big(Y\setminus \{y,y_1\}\big) \cup \{x_1\}$ (see Figure~\ref{fig:pair_b}). If $N(y_1)$ contains a (unique) leaf, say $x_2$, then define $S(x,y)=\big(Y\setminus \{y,y_1\}\big) \cup \{x_1,x_2\}$ (see Figure~\ref{fig:pair_c}). In any case $S(x,y)$ is an independent $[1,2]$-set of $T$ with $x,y\notin S(x,y)$ and such that $x$ has just one neighbor in it.
\end{proof}

\par\medskip

\begin{figure}[htbp]
  \begin{centering}
    \subfigure[]{\includegraphics[width=0.3\textwidth]{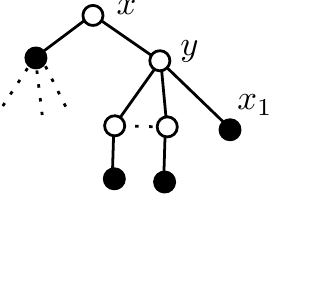}\label{fig:pair_a}} \hspace{0.25cm}
    \subfigure[]{\includegraphics[width=0.3\textwidth]{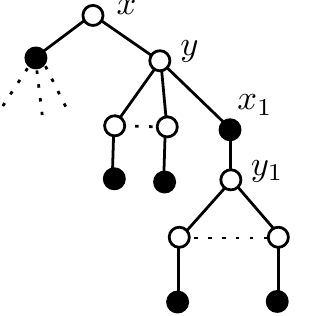}\label{fig:pair_b}}\hspace{0.25cm}
    \subfigure[]{\includegraphics[width=0.3\textwidth]{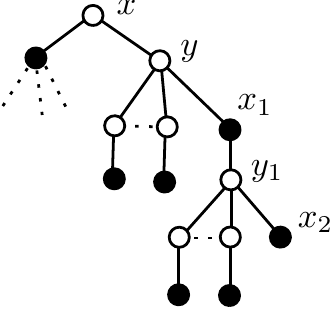}\label{fig:pair_c}}
\caption{Black vertices are in $S(x,y)$.}\label{fig:pair}
  \end{centering}
\end{figure}

The last lemma of this section shows that having no strong support vertices is a sufficient condition for a tree for belonging to the family $\mathcal{F}_2$.

\begin{lem}\label{lem:f2-tress}
Let $T$ be a tree with no strong support vertices, then $T\in \mathcal{F}_2$.
\end{lem}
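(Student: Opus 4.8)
The plan is to prove the statement constructively by exhibiting a generating family for $T$, i.e.\ by showing how to decompose $T$ into $p_2$-trees joined through edges between their $p_2$-sets. Recall that a tree lies in $\mathcal{F}_2$ precisely when it admits such a decomposition, equivalently when it has an independent $[1,2]$-set. So it suffices to build an independent $[1,2]$-set $S$ of $T$ directly and then invoke the correspondence of the opening Definition: removing from $T$ all edges with both endpoints outside $S$ yields a forest whose components are $p_2$-trees with $p_2$-sets $X_i = V(T_i)\setminus S$, and adding back the removed edges recovers $T$; since each such edge joins two vertices outside $S$, it joins two $p_2$-sets, as required.

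First I would root $T$ at a leaf (or handle the trivial cases $|V(T)|\le 2$ separately, where the statement is immediate). The natural approach is a greedy/inductive argument processing vertices by non-increasing depth: repeatedly take a deepest non-dominated vertex $u$ whose children are all leaves, and decide whether to put $u$ or its parent into $S$. Because $T$ has no strong support vertex, every vertex has at most one leaf child, which is exactly what keeps the ``$[1,2]$'' constraint satisfiable: a support vertex is forced out of $S$, its unique leaf child goes into $S$, and then the parent contributes at most one further neighbor into $S$. Concretely, I would maintain the invariant that the partial set $S$ built so far is independent, that every already-processed vertex is dominated, and that no vertex yet has three neighbors forced into $S$. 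A cleaner alternative that avoids case-chasing: define $S$ to be the set obtained from the bipartition class not containing too many leaves — but since a generic tree need not be a $p_2$-tree this does not work globally, which is precisely why the decomposition into several $p_2$-trees is needed.

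Perhaps the smoothest route is induction on $|V(T)|$. Pick a longest path and let $u$ be a support vertex at its end; by the no-strong-support hypothesis $u$ has exactly one leaf child $\ell$, and let $w$ be the parent of $u$. Let $T' = T - \{\ell, u\}$ and delete also any resulting isolated vertices; $T'$ still has no strong support vertices (removing a branch cannot create a new strong support vertex, since degrees only drop), so by induction $T'$ has a generating family, hence an independent $[1,2]$-set $S'$. Now extend $S'$ to $T$ by adding $\ell$ and, if $w\notin S'$, checking that $w$ still has at most two neighbors in $S'\cup\{\ell\}$; if $w$ already had two neighbors in $S'$, one instead drops $w$'s membership considerations and argues $u$ can join $S$ while $w$ stays out — here one uses that $u$'s only neighbors are $\ell$, $w$, and leaves (all of which are handled). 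The components attach through $p_2$-set vertices by the same reasoning as above.

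The main obstacle I anticipate is the bookkeeping at the attachment vertex $w$: after the inductive step, $w$ may already be saturated (two neighbors in $S'$) or may itself lie in $S'$, and in the latter case $\ell\in S$ together with $w\in S'$ would violate independence, forcing a local modification of $S'$ around $w$. Handling this cleanly likely requires strengthening the induction hypothesis — e.g.\ proving simultaneously that for any prescribed non-leaf vertex one can find an independent $[1,2]$-set avoiding it, which is exactly the content of Lemma~\ref{lem:pair} for $p_2$-trees and would need a tree-wide analogue. So I expect the real work to be formulating the right inductive statement about which vertices can be excluded from $S$, rather than the domination or independence verifications, which are routine given the no-strong-support hypothesis.
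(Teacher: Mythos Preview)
Your proposal is a plan rather than a proof, and you candidly flag the central difficulty (controlling the attachment vertex $w$) without resolving it. There is also a concrete error in the inductive step: the claim that $T' = T - \{\ell, u\}$ still has no strong support vertices is false in general. With $u$ at the end of a longest path and $T$ having no strong support, $u$ has exactly one leaf child $\ell$ and hence degree~$2$. Deleting $\{u,\ell\}$ drops the degree of $w$ by one; if $w$ originally had degree~$2$ it becomes a leaf in $T'$, and then the parent of $w$ may already have another leaf neighbour and so become a strong support in $T'$. Your parenthetical ``degrees only drop'' is precisely the wrong direction: dropping a degree to $1$ is how new leaves, and hence new strong supports, are created. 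So the inductive hypothesis does not apply to $T'$ as stated, and repairing this forces exactly the strengthened hypothesis you anticipate but do not supply.

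The paper sidesteps all of this with a direct, non-inductive construction. Root $T$ at a leaf and label vertices $X$ or $Y$ top-down: the root is $X$, its unique child $Y$; every child of a $Y$-vertex is $X$; an $X$-vertex with a single child labels it $Y$; and an $X$-vertex with several children labels exactly one of them $Y$ (the leaf child if one exists, otherwise an arbitrary child) and the remaining children $X$. The no-strong-support hypothesis enters at one point only: it guarantees that an $X$-vertex has at most one leaf among its children, so every child receiving label $X$ is a non-leaf and will itself acquire a $Y$-child, hence will not be left undominated. Deleting all $X$--$X$ edges then yields a forest of $p_2$-trees (each $X$-vertex retains at most one $Y$-parent and exactly one $Y$-child), giving the generating family at once. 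This explicit global labeling replaces the inductive bookkeeping you were bracing for.
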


\begin{proof}
We root the tree $T$ in a leaf $v$ and we label the vertices of $T$ as $X$ or $Y$ with the following rules. First of all we label $v$ as $X$ and its unique neighbor as $Y$. All the children of any vertex labeled as $Y$ are labeled as $X$. If a vertex with label $X$ has just one child we label it as $Y$. If a vertex with label $X$ has two or more children and (just) one of them is a leaf, we label this leaf as $Y$ and the rest of children as $X$ and finally if a vertex with label $X$ has two or more children and none of them is a leaf, we label one of the children as $Y$ and the rest of children as $X$.

Removing all edges of $T$ between two vertices labeled as $X$ gives a forest $T_1,T_2,\dots ,T_t$ and note that each $T_i$ is a $p_2$-tree where vertices labeled as $X$ are a $p_2$-set and vertices labeled as $Y$ are an independent $[1,2]$-set. So we obtain a generating family for $T$ and $T\in \mathcal{F}_2$ as desired.
\end{proof}

Finally we show the characterization of $[1,2]$-semiexcellent trees, as trees having no strong support vertices with the exception of the path $P_3$.

\begin{thm}
Let $T$ be a tree, $T\neq P_3$. Then $T$ is $[1,2]$-semiexcellent if and only if $T$ has no strong support vertices.
 \end{thm}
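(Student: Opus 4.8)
The plan is to prove both implications separately, using the lemmas already established in this section.

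\medskip

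\noindent\textbf{Necessity.} Suppose $T$ is $[1,2]$-semiexcellent and, for contradiction, that $T$ has a strong support vertex $u$ with two leaves $\ell_1,\ell_2 \in N(u)$. Since $T \neq P_3$, the vertex $u$ has a neighbor $w$ other than $\ell_1,\ell_2$ (if $u$ had only the two leaves as neighbors, $T$ would be $P_3$). By $[1,2]$-semiexcellence there is an independent $[1,2]$-set $S_w$ containing $w$. Then $w \in N(u)$, and $N(u)\setminus\{w\}$ contains the two leaves $\ell_1,\ell_2$, contradicting Lemma~\ref{lem:oneleaf}. Hence $T$ has no strong support vertex.

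\medskip

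\noindent\textbf{Sufficiency.} Suppose $T$ has no strong support vertices and let $v \in V(T)$ be arbitrary; I must produce an independent $[1,2]$-set containing $v$. By Lemma~\ref{lem:f2-tress}, $T \in \mathcal{F}_2$, so fix a generating family $\mathcal{G}(T)=\{T_1,\dots,T_t\}$ with $V(T_i)=X_i\cup Y_i$. The vertex $v$ lies in some $T_i$; without loss of generality $v \in T_1$, and either $v \in Y_1$ or $v \in X_1$.

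If $v \in Y_1$, then $v$ already belongs to the independent $[1,2]$-set $S=\bigcup_{i=1}^t Y_i$ associated to $\mathcal{G}(T)$, and we are done. So assume $v = x \in X_1$. The idea is to modify only the component $T_1$, keeping $\bigcup_{i\geq 2} Y_i$ untouched, and to check that the edges joining $T_1$ to the other components cause no trouble. If every neighbor $y$ of $x$ in $T_1$ satisfies that $N_{T_1}(y)\setminus\{x\}$ contains at most one leaf of $T_1$, Lemma~\ref{lem:p2-trees} gives an independent $[1,2]$-set $S_x$ of $T_1$ containing $x$; I then set $S = S_x \cup \bigcup_{i\geq 2} Y_i$. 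The one subtlety is that a vertex $z$ of $T_1$ which is not a leaf of $T_1$ but is incident to an edge leaving $T_1$ in $T$ — these are exactly the vertices of $X_i$'s that received an added edge — could pick up an extra neighbor in $S$; but by the construction in Lemma~\ref{lem:f2-tress} (rooting at a leaf, labeling), the added edges of the generating family always join two $X$-labeled vertices, so such a $z$ is in some $X_j$ and is never put into $S$, hence contributes no new $S$-neighbor to any vertex outside $S$, and the only vertices gaining an $S$-neighbor across components are themselves in the $X_j$'s. Conversely, if some neighbor $y$ of $x$ in $T_1$ has $N_{T_1}(y)\setminus\{x\}$ containing two or more leaves of $T_1$, then since $T$ has no strong support vertex, at least one such leaf of $T_1$, say $\ell$, is not a leaf of $T$, so $\ell\in X_j$ for some $j$ and $\ell$ is incident to an added edge of the generating family; here I instead use Lemma~\ref{lem:pair} applied to the adjacent pair $x,y$ (note neither is a leaf of $T_1$, as $x$ has the child $y$ and $y$ has at least one child) to get an independent $[1,2]$-set $S(x,y)$ of $T_1$ in which $x,y\notin S(x,y)$ and $x$ has exactly one neighbor in it — but this does not contain $x$. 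To fix this, I observe that I should instead root $T$ at a leaf so that $v$ becomes the leaf $X$-vertex of its component; since $T\neq P_3$ one checks this relabeling is always possible, and then $v$ has a unique child in $T_1$, so the "at most one leaf" hypothesis of Lemma~\ref{lem:p2-trees} for the single neighbor of $x$ follows from $T$ having no strong support vertex. Thus in all cases Lemma~\ref{lem:p2-trees} applies and yields the desired set.

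\medskip

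\noindent\textbf{Main obstacle.} The delicate point is the interaction between the chosen component $T_1$ and the added edges of the generating family: I must guarantee that re-selecting which vertices of $T_1$ enter $S$ does not create a vertex outside $S$ with three $S$-neighbors once the cross-component edges are taken into account. Handling this cleanly will likely require either (a) choosing the root of $T$ so that $v$ is forced to be the degree-constrained leaf of its $p_2$-component — reducing everything to the single-neighbor case of Lemma~\ref{lem:p2-trees} — or (b) verifying directly that, in the labeling of Lemma~\ref{lem:f2-tress}, every endpoint of an added edge lies in an $X$-set and every leaf of $T$ that is a leaf of its component stays dominated, so that the local modifications of Lemmas~\ref{lem:p2-trees} and~\ref{lem:pair} never increase the $S$-degree of a vertex across a component boundary. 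I expect route (a) to be the shorter argument.
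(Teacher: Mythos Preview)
Your necessity argument is correct and coincides with the paper's.

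Your sufficiency argument, however, has a genuine gap at exactly the point you flag as the ``main obstacle,'' and your proposed routes (a) and (b) do not close it.

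The central issue is the cross-component edges. Even when Lemma~\ref{lem:p2-trees} applies to $T_1$ and you form $S = S_x \cup \bigcup_{i\ge 2} Y_i$, this set can fail to be a $[1,2]$-set of $T$. The problematic vertices are not the ones you describe: the danger is a vertex $x_j\in X_j$ (for some $j\ge 2$) that is joined to $T_1$ by one of the added edges. Such an $x_j$ may already have two neighbours in $Y_j$; the added edge now gives it a third $S$-neighbour inside $S_x$. Your sentence ``such a $z$ is in some $X_j$ and is never put into $S$, hence contributes no new $S$-neighbor to any vertex outside $S$'' misreads the problem: the issue is not whether $z$ enters $S$, but whether $z$ itself acquires too many $S$-neighbours.

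The paper's fix is precisely the mechanism you did not find: when some $x_j\in X_j$ is overloaded in this way, one of its two $Y_j$-neighbours, say $y_j$, is not a leaf of $T$ (by the no-strong-support hypothesis), and Lemma~\ref{lem:pair} is applied \emph{inside $T_j$} to the pair $(x_j,y_j)$ to produce a replacement set $S_j$ of $T_j$ in which $x_j$ has only one neighbour. One then iterates over the components that need repair. Your attempt to apply Lemma~\ref{lem:pair} to the pair $(x,y)$ inside $T_1$ is misplaced (as you noticed, it removes $x$ from the set); the lemma is meant for the \emph{other} components, to lower the $S$-degree of the overloaded vertex there.

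Route~(a) does not work either: choosing a root of $T$ does not let you force an arbitrary vertex $v$ to become a leaf of its $p_2$-component; whether $v$ has $T_1$-degree $1$ or $2$ depends on the global labeling, not just the root, and there is no reason this can always be arranged.

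In short: keep your necessity proof, but for sufficiency you need the iterative repair across components via Lemma~\ref{lem:pair}, applied to each overloaded $x_j\in X_j$ rather than to the pair containing $x$.
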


\begin{proof}
Suppose that $T$ is $[1,2]$-semiexcellent and that $v\in V(T)$ is a strong support vertex of $T$. Let $u_1,u_2 \in N(v)$ be two leaves of $T$. Using that $T\neq P_3$, there exists $w\in N(v)\setminus \{u_1,u_2\}$. By hypothesis there exists an independent $[1,2]$-set $S_w$ containing $w$ and by Lemma~\ref{lem:oneleaf} the set $N(v)\setminus\{w\}$ contains at most one leaf, that contradicts the fact $u_1,u_2 \in N(v)\setminus\{w\}$.

Conversely suppose that $T$ has no strong support vertices and let $v\in V(T)$. By Lemma~\ref{lem:f2-tress}, $T\in \mathcal{F}_2$ so let $\mathcal{G}(T)=\{ T_1,\dots, T_t\}$ be a generating family for $T$. If $v\in Y_i$ for some $i\in \{1,\dots ,t\}$ then $v\in Y=\bigcup_{j=1}^t Y_j$, that is an independent $[1,2]$-set of $T$. So suppose that $v=x\in X=\bigcup _{j=1}^t X_j$ and without loss of generality consider the case $x\in X_1$. We are going to construct an independent $[1,2]$-set of $T$ containing $x$.

By Lemma~\ref{lem:p2-trees}, the set $S_x=\big (Y_1\setminus N_{T_1}(x)\big)\cup L_{T_1}(x) \cup \{x\}$ is an independent $[1,2]$-set if $T_1$, so it is clear that $S^1=S_x \cup \big(\bigcup_{j=2}^t Y_j \big)$ is independent and dominates $T$. If $S^1$ is a $[1,2]$-set we are done. On the contrary if there exits $u\in V(T)\setminus S^1$ with more than two neighbors in $S^1$ it must be (w.l.o.g.) $x_2\in X_2$ with exactly one neighbor in $S_x$ (by definition of the generating family $\mathcal{G}(T)$) and two neighbors in $Y_2$, at least one of them, say $y_2$, is not a leaf of $T$ because $T$ has no strong support vertices. Using Lemma~\ref{lem:pair}, let $S_2 =S(x_2,y_2)$ be an independent $[1,2]$-set of $T_2$ such that $x_2,y_2\notin S(x_2,y_2)$ and $x_2$ is dominated just once. Now we call $S^2=S_x\cup S_2\cup \big(\bigcup_{j=3}^t Y_j \big)$. Again $S^2$ is an independent dominating set of $T$, if it is also a $[1,2]$-set then we are done. If it is not the case, there exists $x_3\in X_3$ (w.l.o.g.) with exactly one neighbor in $S_x\cup S_2$ (again by definition of the generating family $\mathcal{G}(T)$) and two neighbors in $Y_3$. We repeat the same construction in $T_3$ as in $T_2$ (see Figure~\ref{fig:excellent}). Iterating the process as many times as necessary we finally obtain $S^r=S_x\cup S_2\cup \dots \cup S_r \cup \big(\bigcup_{j=r+1}^t Y_j \big)$ which is an independent $[1,2]$-set of $T$ containing $x$.
\end{proof}

\begin{figure}[htbp]
\begin{center}
\includegraphics[width=0.40\textwidth]{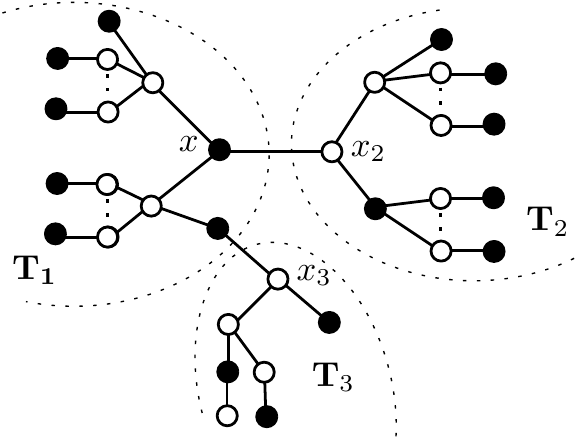}
\caption{Black vertices are in the independent $[1,2]$-set containing $x$.}
\label{fig:excellent}
\end{center}
\end{figure}

\section{A linear algorithm for trees}\label{sec:algorithm}

The characterization of trees having an independent $[1,2]$-set shown in Theorem 11 of~\cite{chellali2} does not allow to devise a polynomial algorithm to solve this decision problem. In this final section we focus on providing such algorithm for this graph class. In addition our algorithm can be easily modified to obtain the cardinality of the smallest independent $[1,2]$-set of a tree, which provides the answer to Problem 8 of~\cite{chellali2} regarding the parameter $i_{[1,2]}(T)$. We begin with the definition of the next labeling of vertices.

\begin{definition}
Let $G$ be a graph with at least two vertices, and let $v\in V(G)$. An independent vertex set $S\subseteq V(G)$ is of {\emph type I for $v$} if every vertex $u\in V(G)\setminus (S\cup \{v\})$ has at least one and at most two vertices in $S$ and $v$ is either in $S$ or it is not in $S$ and has zero, one or two neighbors in $S$. We denote $I(v,G)$ the family of type I sets for $v$ in $G$. Given $S\in I(v,G)$ we define the following labeling of $v$.

\par\bigskip

$L_S(v)=
\left\{
\begin{array}{rl}
0 & \text{if } v\in S \\
k &  \text{if } v\notin S \text{ and $v$ has $k$ neighbors in }S, k \geq 1 \\
-1& \text{if } N[v]\cap S=\emptyset \text{ and every neighbor of $v$ has exactly one }\\
& \text{neighbor in }S \\
-2& \text{if } N[v]\cap S=\emptyset \text{ and there exists a neighbor of $v$ having }\\
&\text{two neighbors in }S\\
\end{array}
\right.
$
\end{definition}

\begin{rmk}\label{rmk:labeling}
Note that any independent $[1,2]$-set of $G$ is of type I for every vertex in $G$. It is also clear from the definition that if there exists $R\in I(v,G)$ with $L_R(v)=-1$ then $S=R\cup \{v\}$ satisfies $S\in I(v,G)$ and $L_S(v)=0$.
\end{rmk}

The following lemma is straightforward.
\begin{lem}\label{lem:star}
Let $K_{1,r}$ be the star with $x_1,\dots x_r, (r\geq 1)$ leaves and center $v$. If $r\geq 3$ then the unique type I set for $v$ is $S=\{v\}$ and $L_S(v)=0$. If $r=2$ then $S=\{v\}$ and $S'=\{x_1,x_2\}$ are the unique type I sets for $v$ and they satisfy $L_S(v)=0$, $L_{S'}(v)=2$. If $r=1$ then $S=\{v\}$ and $S'=\{x_1\}$ are the unique type I sets for $v$ and they satisfy $L_S(v)=0$, $L_{S'}(v)=1$.
\end{lem}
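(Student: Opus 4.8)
The plan is a short case distinction on whether the center $v$ belongs to the type I set, exploiting the rigidity of $K_{1,r}$: the leaves $x_1,\dots,x_r$ are pairwise non-adjacent and each is adjacent only to $v$.

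First I would treat the case $v\in S$. Since $S$ is independent and every leaf is adjacent to $v$, no leaf can lie in $S$, so $S=\{v\}$ is forced. Then one checks directly that this set is of type I: the vertices of $V(K_{1,r})\setminus(S\cup\{v\})$ are exactly $x_1,\dots,x_r$, and each of them has precisely one neighbor, namely $v$, in $S$, so the ``at least one and at most two'' requirement holds; and by definition $L_S(v)=0$. This is valid for every $r\ge 1$ and accounts for the set $\{v\}$ appearing in all three parts of the statement.

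Next I would treat the case $v\notin S$, so that $S\subseteq\{x_1,\dots,x_r\}$. If some leaf $x_j$ were not in $S$, then $x_j\in V(K_{1,r})\setminus(S\cup\{v\})$ and its only neighbor $v$ is not in $S$, so $x_j$ would have zero neighbors in $S$, contradicting the type I condition. Hence $S$ must contain every leaf, i.e. $S=\{x_1,\dots,x_r\}$. But then $v\notin S$ has $r$ neighbors in $S$, and a type I set only allows $v$ to have at most two neighbors in $S$; so this case occurs only for $r\le 2$. For $r=1$ this gives $S'=\{x_1\}$ with $L_{S'}(v)=1$; for $r=2$ it gives $S'=\{x_1,x_2\}$ with $L_{S'}(v)=2$; and for $r\ge 3$ no such set exists. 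Combining the two cases yields precisely the three assertions.

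I do not expect any real obstacle here; the only point requiring a little care is to verify that the two candidate sets genuinely satisfy the definition of type I set (in particular the lower bound ``at least one neighbor in $S$''), and then to read off the labels $L_S(v)$ and $L_{S'}(v)$ straight from the definition. It is the independence of $S$ together with the upper bound $\le 2$ that rules out every other candidate.
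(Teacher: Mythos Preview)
Your argument is correct: the dichotomy on whether $v\in S$, together with the observation that any omitted leaf would be undominated and that $v$ can have at most two neighbors in $S$, cleanly pins down all type~I sets and their labels. The paper does not give a proof at all---it simply declares the lemma ``straightforward''---so your write-up is entirely compatible with (indeed, more detailed than) what the authors intended.
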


In the following lemma we add one new vertex and just one edge to a graph and we show how to obtain all type I sets for the new vertex.

\begin{lem}\label{lem:onevertex}
Let $G$ be a graph with at least two vertices and let $v\in V(G)$. Let $G'$ be the graph obtained from $G$ and a new vertex $v'$ by adding edge $vv'$ and let $S'\subseteq V(G')$. Then
\begin{enumerate}
\item $S'\in I(v',G')$ and $L_{S'}(v')=-2$ if and only if $S'\in I(v,G)$ and $L_{S'}(v)=2$
\item $S'\in I(v',G')$ and $L_{S'}(v')=-1$ if and only if $S'\in I(v,G)$ and $L_{S'}(v)=1$
\item $S'\in I(v',G')$ and $L_{S'}(v')=0$ if and only if $S'=S\cup \{v'\}$, $S\in I(v,G)$ and $L_S(v)\in \{-2,-1,1\}$
\item $S'\in I(v',G')$ and $L_{S'}(v')=1$ if and only if $S'\in I(v,G)$ and $L_{S'}(v)=0$. In addition if there exists $R\in I(v,G)$ with $L_{R}(v)=-1$ then $S=R\cup \{v\}$ satisfies $S\in I(v',G')$ with $L_{S}(v')=1$.
\end{enumerate}
\end{lem}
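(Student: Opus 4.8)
The plan is to prove each of the four biconditionals separately by a direct case analysis, exploiting the fact that $v'$ is a leaf of $G'$ with unique neighbor $v$, so that the membership and the label of $v'$ in a type I set is completely controlled by the membership and label of $v$. The guiding principle throughout is that for any $S'\subseteq V(G')$, the restriction to $V(G)$ of the type I condition for $v'$ in $G'$ is exactly the type I condition for $v$ in $G$, \emph{except} that the role played by the constraint ``$v'$ is correctly handled'' has to be translated into a constraint on $v$. I would set up the proof by first recording two trivial but repeatedly used observations: (a) since $N_{G'}(v')=\{v\}$, a set $S'$ is independent in $G'$ iff $S'\cap V(G)$ is independent in $G$ and not both $v,v'\in S'$; and (b) the vertices of $V(G')\setminus(S'\cup\{v'\})$ are exactly the vertices of $V(G)\setminus(S'\cap V(G))$ possibly together with $v$ when $v\notin S'$, so checking the $[1,2]$-domination condition on these vertices in $G'$ amounts to checking it in $G$ on $V(G)\setminus(S\cup\{v\})$ plus, separately, accounting for $v$ itself.

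For item (1): if $S'\in I(v',G')$ with $L_{S'}(v')=-2$ then $v'\notin S'$ and $v'$ has no neighbor in $S'$, so $v\notin S'$; the ``$-2$'' label forces some neighbor of $v'$ — necessarily $v$ — to have two neighbors in $S'$, and since the only possible neighbor of $v$ lying outside $G$ is $v'\notin S'$, $v$ has two neighbors in $S'\cap V(G)=S'$. Conversely if $S'\in I(v,G)$ with $L_{S'}(v)=2$, then $v\notin S'$, $v$ has two neighbors in $S'$, so $v'$ is dominated exactly twice as a member of $V(G')\setminus(S'\cup\{v'\})$... wait — $v'\notin S'$, and $v'$'s only neighbor is $v\notin S'$, so $N_{G'}[v']\cap S'=\emptyset$ and $v'$ has a neighbor ($v$) with two neighbors in $S'$: that is precisely $L_{S'}(v')=-2$, and the type I conditions on the remaining vertices are unchanged since $v$ is already type-I-legal in $G$ with exactly two neighbors in $S'$, hence legal in $G'$ too. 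Item (2) is identical with ``two'' replaced by ``one'' and ``$-2$'' by ``$-1$''. Item (4) is the reverse direction: $L_{S'}(v')=1$ means $v'\notin S'$ and $v'$ has exactly one neighbor in $S'$, i.e.\ $v\in S'$; then $S=S'$ has $v\in S$, so $L_S(v)=0$, and all other vertices keep their status; conversely $v\in S$ makes $v'$ a non-member with exactly one neighbor in $S$. The ``In addition'' clause of (4) is immediate from Remark~\ref{rmk:labeling}.

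The one case needing genuine (though still short) care is item (3), the case $L_{S'}(v')=0$, i.e.\ $v'\in S'$. Then by independence $v\notin S'$, so write $S'=S\cup\{v'\}$ with $S=S'\cap V(G)$ and $v\notin S$. Now $v$, as a vertex of $V(G')\setminus(S'\cup\{v'\})$, already has one neighbor ($v'$) in $S'$, so to keep $v$ legal for type I in $G'$ we need $v$ to have at most one \emph{further} neighbor in $S$, i.e.\ $|N_G(v)\cap S|\le 1$; combined with the possibility $|N_G(v)\cap S|=0$, this says $L_S(v)\in\{-2,-1,1\}$ — and we must also check that the ``$-2$'' case is genuinely allowed, which it is, since when $v$ has a neighbor with two neighbors in $S$ that is a condition on $S$ alone and is unaffected by adding the leaf $v'$. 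Conversely, starting from $S\in I(v,G)$ with $L_S(v)\in\{-2,-1,1\}$ and setting $S'=S\cup\{v'\}$: independence holds because $v\notin S$; $v'$ gets label $0$; and $v$ now has between one and two neighbors in $S'$, so $v$ is legal; all other vertices are untouched. I expect the main (minor) obstacle to be bookkeeping the status of $v$ carefully in each direction — it is the only vertex whose neighborhood changes — and making sure that in item (3) the labels $-2,-1,1$ (and not $2$, and not $0$) are exactly the ones compatible with $v$ acquiring one new dominating neighbor $v'$ without exceeding the bound of two.
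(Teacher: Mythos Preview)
Your proposal is correct and follows essentially the same approach as the paper: a direct case analysis using only the definitions, exploiting that $v'$ is a leaf with unique neighbor $v$. The paper is simply terser---it dismisses items (1), (2), and the main equivalence of (4) as ``clear by the definition of type I set'' and invokes Remark~\ref{rmk:labeling} for the additional clause of (4), spelling out only item (3) in the same way you do; your write-up fills in the details the paper omits but contains no new ideas or different structure.
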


\begin{proof}

First and second statements are clear by the definition of type I set.

Now suppose that $S'\in I(v',G')$ and $L_{S'}(v')=0$ then $v'\in S'$ and we define $S=S'\setminus \{v'\}$ which is a type I set for $v$ in $G$. If $v$ has two neighbors in $S'$ then $L_S(v)=1$, if $v'$ is the unique neighbor of $v$ in $S'$ and every other neighbor of $v$ in $G$ is dominated just one by vertices in $S'$ then $L_S(v)=-1$ and if $v'$ is the unique neighbor of $v$ in $S'$ and there exists a neighbor of $v$ in $G$ which is dominated twice by vertices in $S'$ then $L_S(v)=-2$. The converse is trivial using the definition of type I set.

To prove the last statement, just by definition, $S'\in I(v',G')$ and $L_{S'}(v')=1$ if and only if $S'\in I(v,G)$ and $L_{S'}(v)=0$. The additional implication comes from Remark~\ref{rmk:labeling}.
\end{proof}

\begin{rmk}\label{rmk:onevertex}
In addition to characterize sets $S'\in I(v',G')$, Lemma~\ref{lem:onevertex} also ensures that from any $S\in I(v,G)$ can be obtained at least one $S'\in I(v',G')$ and it shows the labeling $L_{S'}(v')$ in each case.
\end{rmk}

In the next lemma we join two graphs with one new edge and we show how to obtain all type I sets for one vertex of this edge.

\begin{lem}\label{lem:twographs}
Let $G,G'$ be two graphs with at least two vertices and let $v\in V(G), v'\in V(G')$. Let $G''$ be the graph obtained from $G$ and $G'$ by adding edge $vv'$. Then
\begin{enumerate}
\item $S''\in I(v',G'')$ and $L_{S''}(v')=-2$ if and only if $S''=S\cup S'$, $S\in I(v,G)$, $S'\in I(v',G')$ and $(L_{S}(v),L_{S'}(v'))\in \{ (1,-2), (2,-2), (2,-1)\}$.

\item $S''\in I(v',G'')$ and $L_{S''}(v')=-1$ if and only if  $S''=S\cup S'$, $S\in I(v,G)$, $S'\in I(v',G')$ and $(L_{S}(v),L_{S'}(v'))= (1,-1)$.

\item $S''\in I(v',G'')$ and $L_{S''}(v')=0$ if and only if $S''=S\cup S'$, $S\in I(v,G)$, $S'\in I(v',G')$ and $(L_{S}(v),L_{S'}(v'))\in \{ (-2,0),(-1,0), (1,0)\}$.\\
Furthermore suppose that there exists $R'\in I(v',G')$ with $L_{R'}(v')=-1$ then $S''=S\cup (R'\cup \{v'\})$, where $S\in I(v,G)$ and $L_{S}(v)\in \{ -2,-1,1 \}$, satisfies $S''\in I(v',G'')$ and $L_{S''}(v')=0$.

\item $S''\in I(v',G'')$ and $L_{S''}(v')=1$ if and only if $S''=S\cup S'$, $S\in I(v,G)$, $S'\in I(v',G')$ and $(L_{S}(v),L_{S'}(v'))\in \{ (0,-2), (0,-1), (1,1), (2,1)\}$. Furthermore suppose that there exists $R\in I(v,G)$ with $L_{R}(v)=-1$ then $S''=(R\cup \{v\})\cup S'$, where $S'\in I(v',G')$ and $L_{S'}(v')\in \{ -2,-1\}$, satisfies $S''\in I(v',G'')$ and $L_{S''}(v')=1$.

\item $S''\in I(v',G'')$ and $L_{S''}(v')=2$ if and only if $S''=S\cup S'$, $S\in I(v,G)$, $S'\in I(v',G')$ and $(L_{S}(v),L_{S'}(v'))\in \{ (0,1), (1,2), (2,2)\}$. Furthermore suppose that there exists $R\in I(v,G)$ with $L_{R}(v)=-1$ then the set $S''=(R\cup \{v\})\cup S'$, where $S'\in I(v',G')$ and $L_{S'}(v')=1$, satisfies $S''\in I(v',G'')$ and $L_{S''}(v')=2$.

\end{enumerate}
\end{lem}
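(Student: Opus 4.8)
The plan is to argue exactly as in Lemma~\ref{lem:onevertex}, except that the ``old'' graph is now split into the two pieces $G$ and $G'$. Given $S''\subseteq V(G'')$, set $S=S''\cap V(G)$ and $S'=S''\cap V(G')$, so $S''=S\cup S'$ with the two parts disjoint. The only adjacency of $G''$ that is not already present in $G$ or in $G'$ is the edge $vv'$; hence $N_{G''}(u)=N_G(u)$ for every $u\in V(G)\setminus\{v\}$, $N_{G''}(u)=N_{G'}(u)$ for every $u\in V(G')\setminus\{v'\}$, $N_{G''}(v)=N_G(v)\cup\{v'\}$ and $N_{G''}(v')=N_{G'}(v')\cup\{v\}$.

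First I would isolate the two facts common to all five items. For independence: $S''$ is independent in $G''$ if and only if $S$ is independent in $G$, $S'$ is independent in $G'$, and one does not have simultaneously $v\in S$ and $v'\in S'$. For membership of the pieces: whenever $S''\in I(v',G'')$ one necessarily has $S\in I(v,G)$ and $S'\in I(v',G')$. Indeed each vertex of $V(G)\setminus\{v\}$ (resp.\ of $V(G')\setminus\{v'\}$) sees in $G''$ only its old neighbourhood, so if it lies outside $S''$ it is dominated once or twice by $S$ (resp.\ by $S'$); moreover $v$, not being the distinguished vertex, must lie in $S''$ or have one or two neighbours in $S''$, and since $N_{G''}(v)\cap S''=(N_G(v)\cap S)\cup(\{v'\}\cap S')$ this forces $v\in S$ or $|N_G(v)\cap S|\le 2$, i.e.\ the clause about $v$ in the definition of $I(v,G)$; for $S'$ there is nothing further to check since $v'$ is itself the distinguished vertex there.

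The core of the proof is then a uniform translation, performed for each target value $\ell\in\{-2,-1,0,1,2\}$ of $L_{S''}(v')$. The label $L_{S''}(v')$ is governed by: (a) whether $v'\in S''$; (b) the count $|N_{G''}(v')\cap S''|=[v\in S'']+|N_{G'}(v')\cap S'|$; and, when $\ell\in\{-1,-2\}$, (c) whether some neighbour of $v'$ has two neighbours in $S''$ --- here the candidates are $v$, which in these cases satisfies $v\notin S''$ and hence has $|N_G(v)\cap S|$ neighbours in $S''$, and the vertices of $N_{G'}(v')$, whose behaviour is exactly what distinguishes $L_{S'}(v')=-1$ from $L_{S'}(v')=-2$. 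One must keep imposing in parallel that $v$ be correctly dominated in $G''$: in particular, when $L_S(v)\in\{-1,-2\}$ the vertex $v$ has no $G$-neighbour in $S$, so it is dominated in $G''$ only via $v'$, which forces $v'\in S''$. Reading (a)--(c) together with the domination of $v$, expressed through $L_S(v)$ (recording $v\in S$, or the number of $G$-neighbours of $v$ in $S$, or the refined $-1/-2$ data) and through $L_{S'}(v')$, yields in each of the five cases precisely the listed set of admissible pairs; conversely every listed pair is checked, using the two facts above, to produce a set $S''=S\cup S'$ lying in $I(v',G'')$ with the stated label. The ``furthermore'' clauses require no extra work: by Remark~\ref{rmk:labeling}, if $R\in I(v,G)$ has $L_R(v)=-1$ then $R\cup\{v\}\in I(v,G)$ has label $0$ (and symmetrically on the $G'$ side), so those assertions are just the instances of the main equivalence in which one piece is produced this way and whose resulting pair $(0,\cdot)$ or $(\cdot,0)$ already occurs in the corresponding list.

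The only genuine difficulty I anticipate is bookkeeping. The vertex whose neighbourhood straddles both graphs is $v$ (dually $v'$), and the whole case analysis hinges on never forgetting that $v$ must satisfy the $[1,2]$ condition in $G''$ and that its neighbour count there equals its $G$-count plus the indicator of $v'\in S''$; once this is tracked consistently, each of the five items reduces to a short and routine verification.
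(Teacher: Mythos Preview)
Your proposal is correct and follows essentially the same approach as the paper: decompose $S''$ into $S=S''\cap V(G)$ and $S'=S''\cap V(G')$, verify that $S\in I(v,G)$ and $S'\in I(v',G')$, then run the case analysis on the possible label pairs, invoking Remark~\ref{rmk:labeling} for the ``furthermore'' clauses. The paper's write-up is terser (it declares sufficiency trivial and walks through the five necessity cases directly), while you spell out the common preliminaries more explicitly, but there is no substantive difference in method.
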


\begin{proof}
The sufficient implication of each equivalence is trivial using the definition of type I set so we just prove the necessity. To this end let $S''\in I(v',G'')$ and denote by $S=S''\cap V(G)$ and $S'=S''\cap V(G')$. Using that each graph has at least two vertices, $S$ and $S'$ are non-empty sets and it is clear that $S\in I(v,G)$ and $S'\in I(v',G')$.

\begin{enumerate}

\item If $L_{S''}(v')=-2$ then $v'$ has no neighbors in $S''$ so $v\notin S''$ and $L_{S}(v)\neq 0$. If $v$ has just one neighbor $z$ in $S''$ then $z\in V(G)$ and $L_{S}(v)=1$ and using that $L_{S''}(v')=-2$ there exists a neighbor of $v'$ in $G'$ with two neighbors in $S''$ so $L_{S'}(v')=-2$. If $v$ has two neighbors in $S''$ then both of them belong to $V(G)$ and $L_{S}(v)=2$. Moreover the neighbors of $v'$ in $G'$ could have one or two neighbors in $S''$, so $L_{S'}(v')\in \{-2,-1\}$.

\item If $L_{S''}(v')=-1$ then by hypothesis every neighbor of $v'$ in $G''$ has just one neighbor in $S''$ and this easily implies that $L_{S}(v)=1$ and $L_{S'}(v')=-1$.

\item If $L_{S''}(v')=0$ then $v'\in S'$ so $L_{S'}(v')=0$. If $v$ has two neighbors in $S''$ then $L_{S}(v)=1$ and if $v'$ is the unique neighbor of $v$ in $S''$ then $L_{S}(v)\in \{-2,-1\}$. The additional implication comes from Remark~\ref{rmk:labeling}.

\item If $L_{S''}(v')=1$ then there are two cases. If $v\in S''$ then $L_{S}(v)=0$ and $L_{S'}(v')\in \{-2,-1\}$ and if $v\notin S''$ then $L_{S}(v)\in \{1,2\}$ and $L_{S'}(v')=1$. The additional implication comes from Remark~\ref{rmk:labeling}.

\item If $L_{S''}(v')=2$ then there are two cases. If $v\in S''$ then $L_{S}(v)=0$ and $L_{S'}(v')=1$ and if $v\notin S''$ then $L_{S}(v)\in\{1,2\}$ and $L_{S'}(v')=2$. The additional implication comes from Remark~\ref{rmk:labeling}.
\end{enumerate}\end{proof}

\begin{rmk}\label{rmk:twographs}
In addition to characterize sets $S''\in I(v',G'')$, Lemma~\ref{lem:twographs} also ensures that from sets $S\in I(v,G)$ and $S'\in I(v',G')$ can be obtained at least one $S''\in I(v',G'')$ if and only if $(L_{S}(v),L_{S'}(v'))\in \{(1,-2), (2,-2), (2,-1),(1,-1),\\(-2,0),(-1,0),
 (1,0),(-2,-1),(-1,-1), (1,-1),(0,-2), (0,-1), (1,1), (2,1),\\
 (-1,-2), (-1,-1), (0,1), (1,2), (2,2), (-1,1)\}$ and it shows $L_{S''}(v')$ in each case.
\end{rmk}

Finally we present a linear algorithm that decides whether or not a tree $T$ has an independent $[1,2]$-set. The algorithm defines an order in the set of non-leaf vertices and proceeds bottom up in the tree.

\noindent \texttt{Algorithm TREE-INDEPENDENT [1,2]-SET}\\
\noindent \texttt{Input: A tree $T$ with $n$ internal vertices.}\\
\noindent \texttt{Output: Whether or not $T$ admits an independent $[1,2]$-set.}\\
\noindent \texttt{choose a non-leaf vertex as the root;}\\
\noindent \texttt{label the rest of vertices with different labels, and in such a way}\\
\noindent \hspace*{5 mm}\texttt{that if $u$ is a descendant of $v$ then $i(u)<i(v)$;}\\
\noindent \texttt{initialize a list for each vertex as $R(u):=\{\}$;}\\
\noindent \texttt{for $i:=1$ to $n$ do}\\
\noindent \hspace*{5 mm}\texttt{let $v$ be the vertex with label $i$, i.e. $i(v)=i$;}\\
\noindent \hspace*{5 mm}\texttt{if $v$ is a support vertex then, apply Lemma~\ref{lem:star} to the star with $v$}\\
\noindent \hspace*{10 mm}\texttt{as center and its descendant leaves to actualize $R(v)$;}\\
\noindent \hspace*{10 mm}\texttt{for each non-leaf descendant $u$ of $v$}\\
\noindent \hspace*{15 mm}\texttt{apply Lemma 7 and actualize $R(v)$;}\\
\noindent \hspace*{5 mm}\texttt{if $v$ is not a support vertex then}\\
\noindent \hspace*{10 mm}\texttt{pick one of its descendant $u$ and apply Lemma 6 for}\\
\noindent \hspace*{15 mm}\texttt{actualizing $R(v)$;}\\
\noindent \hspace*{10 mm}\texttt{for the rest of its descendant $u$ of $v$}\\
\noindent \hspace*{15 mm}\texttt{apply Lemma 7 and actualize $R(v)$;}\\
\noindent \hspace*{5 mm}\texttt{if $R(v)=\emptyset$ then answer NO and end;}\\
\noindent \texttt{od;}\\
\noindent \texttt{if $R(v)\cap \{0,1,2\}= \emptyset$ for the root $v$ then answer NO}\\
\noindent \hspace*{5 mm}\texttt{otherwise answer YES;}\\
\noindent \texttt{end.}\\

\begin{thm}
Let $T$ be a tree with $n$ vertices. The algorithm decides in $O(n)$ time whether or not $T$ has an independent $[1,2]$-set.
\end{thm}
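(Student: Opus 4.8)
The plan is to prove two assertions: that the algorithm answers \textsc{yes} exactly when $T$ admits an independent $[1,2]$-set, and that it runs in $O(n)$ time. For a non-leaf vertex $v$ write $T_v$ for the full subtree of $T$ rooted at $v$, i.e.\ the subgraph induced by $v$ and all its descendants; since every vertex of $T_v$ other than $v$ has all of its $T$-neighbours inside $T_v$, the restriction to $V(T_v)$ of any independent $[1,2]$-set of $T$ belongs to $I(v,T_v)$. The core of the proof is the loop invariant
\[
R(v)=\{\,L_S(v)\;:\;S\in I(v,T_v)\,\}
\]
which I would establish for every internal vertex $v$ at the instant iteration $i(v)$ terminates (assuming the algorithm has not halted earlier). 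Granting it, correctness follows: applied to the root it gives $R(\mathrm{root})=\{L_S(\mathrm{root}):S\in I(\mathrm{root},T)\}$, and a direct check from the definition of type~I set together with Remark~\ref{rmk:labeling} shows that the sets $S\in I(\mathrm{root},T)$ with $L_S(\mathrm{root})\in\{0,1,2\}$ are precisely the independent $[1,2]$-sets of $T$; hence $T$ has such a set iff $R(\mathrm{root})\cap\{0,1,2\}\neq\emptyset$, which is the algorithm's final test. Likewise an early halt with $R(v)=\emptyset$ is correct, because by the restriction observed above $I(v,T_v)=\emptyset$ rules out any independent $[1,2]$-set of $T$.

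I would prove the invariant by induction on $i(v)$. The base case is the vertex of label $1$: it has no internal descendant, so all its children are leaves and $T_v$ is a star $K_{1,r}$, and the initialisation of $R(v)$ by Lemma~\ref{lem:star} is exactly $\{L_S(v):S\in I(v,K_{1,r})\}$. For the inductive step, observe that $T_v$ is built from the star consisting of $v$ and its leaf-children by successively gluing, along the single edge $u\,v$, the subtrees $T_u$ over the internal children $u$ of $v$; these pieces are pairwise vertex-disjoint and meet the star only in $v$. This is exactly the sequence of operations the pseudocode performs: when $v$ is a support vertex it seeds $R(v)$ from that star via Lemma~\ref{lem:star}, otherwise it seeds $R(v)$ by attaching $v$ as a pendant vertex to one child's subtree via Lemma~\ref{lem:onevertex} (whose Remark~\ref{rmk:onevertex} supplies the resulting labels), and in both cases it then merges the remaining child subtrees one at a time. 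Each merge is an instance of Lemma~\ref{lem:twographs}, with the rôles of $(G,v)$ and $(G',v')$ played by $(T_u,u)$ and $(G',v)$ respectively, where $G'$ is the part of $T_v$ containing $v$ assembled so far; by that lemma and Remark~\ref{rmk:twographs}, and using the inductive hypothesis $R(u)=\{L_S(u):S\in I(u,T_u)\}$, replacing $R(v)$ by the set of all labels arising from the admissible transitions applied to the pairs in $R(v)\times R(u)$ yields exactly the label set of the enlarged partial subtree. After all children are processed the assembled graph is $T_v$, so the invariant holds; and if some intermediate label set is empty the loop stops, consistently with the previous paragraph.

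For the running time: producing a root and an ordering of the internal vertices in which every descendant precedes its ancestor is done by one depth-first traversal in $O(n)$ time. In the main loop, handling a vertex $v$ costs $O(1)$ for the star step (Lemma~\ref{lem:star}), $O(1)$ to decide whether $v$ is a support vertex, and $O(1)$ per child of $v$ during the seeding and merging steps, since every list $R(\cdot)$ is a subset of the five-element set $\{-2,-1,0,1,2\}$ and the transition rules of Lemmas~\ref{lem:onevertex} and~\ref{lem:twographs} are fixed constant-size tables. As each non-root vertex is the child of exactly one vertex, the total number of child-visits is $n-1$, so the loop takes $O(n)$ time; the final test on $R(\mathrm{root})$ is $O(1)$, giving $O(n)$ overall.

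The step I expect to be most delicate is the inductive verification of the invariant: one must confirm that the branch structure of the pseudocode really does reconstruct $T_v$ (a vertex with only leaf-children, a vertex with a single child, and the general case being slightly different situations), that ``actualize $R(v)$'' has to be read as forming the union over \emph{all} admissible transitions so that no label is dropped or spuriously introduced, and that the empty-list cases propagate correctly. Matching each pseudocode branch to the exact instance of Lemma~\ref{lem:star}, \ref{lem:onevertex}, or~\ref{lem:twographs} and its companion remark is routine but has to be carried out exhaustively.
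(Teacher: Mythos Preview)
Your proposal is correct and follows essentially the same approach as the paper: both rest on the loop invariant $R(v)=\{L_S(v):S\in I(v,T_v)\}$, the observation that restricting an independent $[1,2]$-set of $T$ to $V(T_v)$ lies in $I(v,T_v)$, and the fact that the lists are bounded in size so each update is constant time. Your write-up is considerably more explicit (inductive proof of the invariant, case-matching against the pseudocode branches, and a per-child accounting of the running time), but the underlying argument is the same.
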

\begin{proof}
At any moment of the execution, the list $R(v)$ associated to the non-leaf vertex $v$ contains all the possible labels $L_S(v)$ where $S\in I(v,T_v)$ and $T_v$ is the subtree rooted in $v$. Note that if $S$ is an independent $[1,2]$-set of $T$ then $S\cap V(T_v)\in I(v,T_v)$ for each non-leaf vertex $v$, so if $R(v)=\emptyset$ for some $v$ then $T$ has no independent $[1,2]$-set. Moreover at the end, there is a independent $[1,2]$-set if and only if $R(v)\cap \{0,1,2\}\neq\emptyset$ where $v$ is the root.

Regarding the complexity, the initial part is done in linear time. In the rest of the algorithm, every vertex different from the root and the leaves is considered twice and the operations over it are done in constant time. Hence, the final complexity is in $O(n)$.
\end{proof}

Although we have preferred to introduce the algorithm in its present form for the sake of simplicity, it would not be difficult to modify it in order to keep track of the minimum cardinality of the possible sets associated to a label in any vertex. Then we obtain a linear algorithm for computing the parameter $i_{[1,2]}(T)$, solving part of Problem 8 posed in~\cite{chellali2}.

In the following examples we show a tree having an independent $[1,2]$-set and another one that has no such set. The vertex indexes appear inside the circles. Figure~\ref{fig:finalstep_yes} shows a tree and the final assignment of labels to every non-leaf vertex. The root $v$ satisfies $R(v)=\{2\}$, so the tree has an independent $[1,2]$-set.
On the other hand in Figure~\ref{fig:finalstep_no} we show a different tree such that the root has no suitable label at the end of the algorithm, therefore the tree has no independent $[1,2]$-set.

\begin{figure}[htbp]
  \begin{centering}
    \subfigure[Final step on the algorithm with labels in any non-leaf vertex, including the root.] {\includegraphics[width=0.40\textwidth]{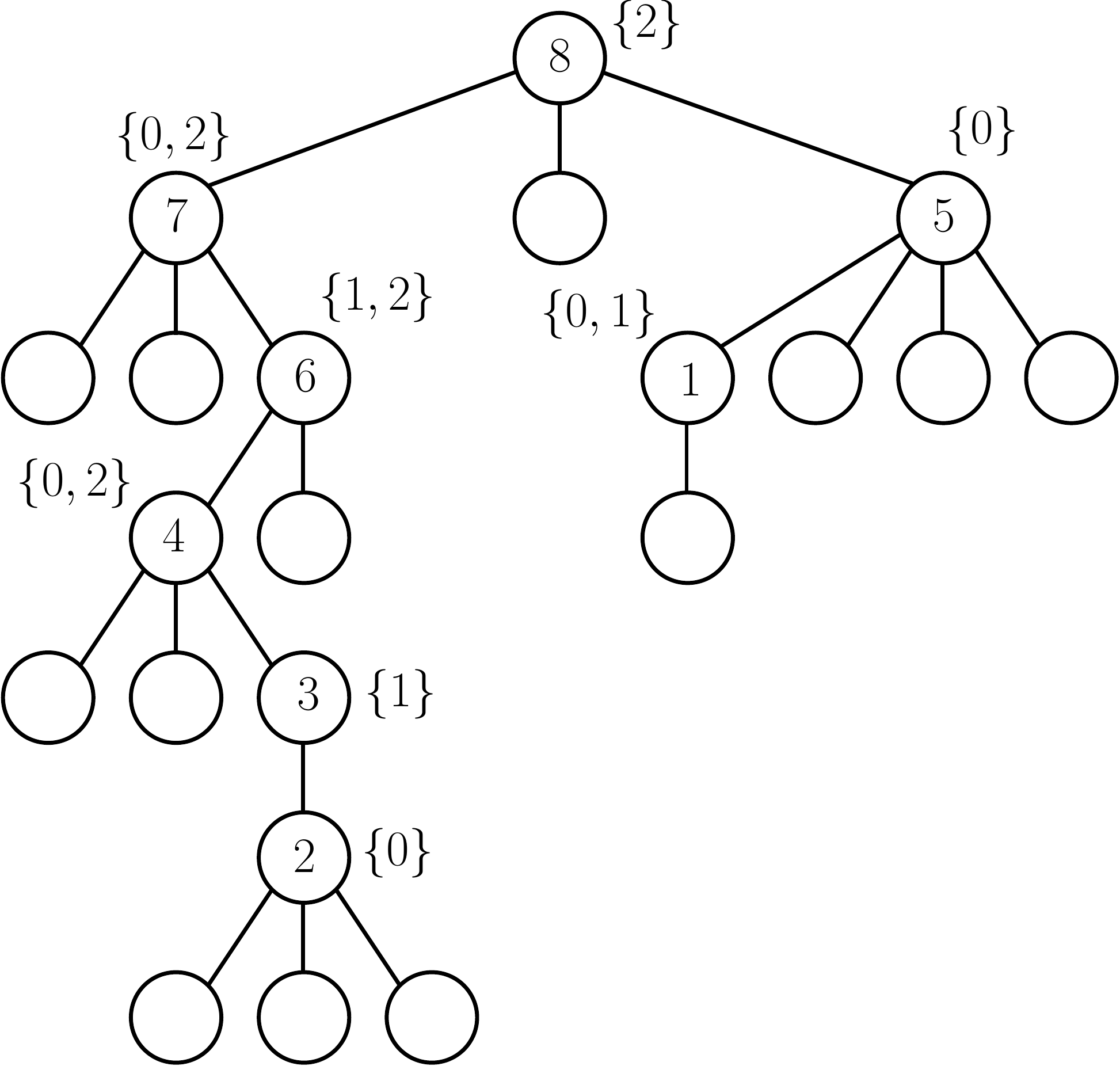}\label{fig:finalstep_yes}}
    \hspace{1cm}
    \subfigure[Final step on the algorithm with no suitable label for the root.] {\includegraphics[width=0.40\textwidth]{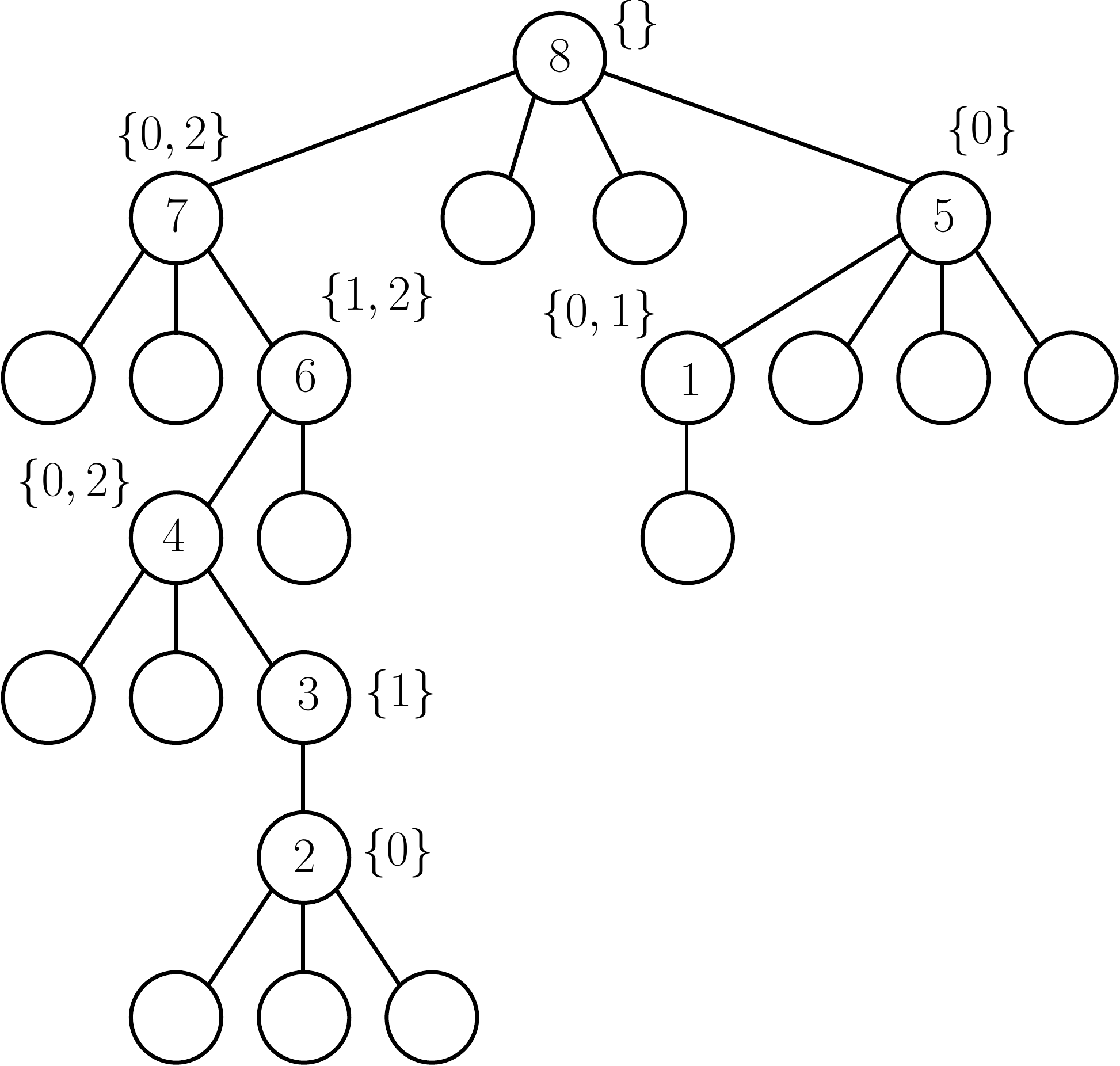}\label{fig:finalstep_no}}
    \caption{Two examples of the algorithm answering YES and NO}\label{fig:algoritmo}
  \end{centering}
\end{figure}


\end{document}